\numberwithin{theorem}{section} 
\numberwithin{definition}{section}
\numberwithin{remark}{section}
\numberwithin{corollary}{section} 
\numberwithin{lemma}{section} 
\numberwithin{example}{section} 
\numberwithin{equation}{section}
\begin{document}

\title{Generalized Fourier--Feynman transforms and generalized convolution 
products on Wiener space II}


 \titlerunning{Generalized Fourier--Feynman transforms and generalized convolution 
products II}        

\author{Sang Kil Shim \and Jae Gil Choi}
 

 \institute{Sang Kil Shim  \at
           Department of Mathematics, Dankook University,  Cheonan 330-714, Republic of Korea\\
           \email{skshim22@dankook.ac.kr} 
           \and
           Jae Gil   Choi  (Corresponding author)\at
           School of General Education, Dankook University,  Cheonan 330-714, Republic of Korea\\
           \email{jgchoi@dankook.ac.kr}
}

\date{Received: date / Accepted: date}

\maketitle

\begin{abstract}
The purpose of this article is to present the second type  fundamental relationship  
between the generalized Fourier--Feynman transform and  the generalized 
convolution product on Wiener space. The relationships in this article are 
also natural extensions (to the case on an infinite dimensional Banach space) 
of the structure which exists between the Fourier transform and the convolution 
of functions on Euclidean spaces.
\keywords{Wiener space \and
Gaussian process \and
generalized Fourier--Feynman transform \and
generalized  convolution product.}
 \subclass{Primary  46G12;  Secondary   28C20 \and 60G15 \and 60J65}
\end{abstract}

%
\setcounter{equation}{0}
\section{Introduction}\label{sec:introduction}

\par
Given a positive real $T>0$, let $C_0[0,T]$ denote one-parameter Wiener space, 
that is, the space of  all  real-valued continuous functions $x$ on $[0,T]$ with $x(0)=0$.
Let $\mathcal{M}$ denote the class of  all Wiener measurable subsets
of $C_0[0,T]$ and let $\mathfrak{m}$ denote Wiener measure.  
Then, as is  well-known,  $(C_0[0,T],\mathcal{M},\mathfrak{m})$ is 
a complete  measure space.

\par
In  \cite{HPS95,HPS96,HPS97-1,PSS98} Huffman,  Park, Skoug and Storvick  established  fundamental 
relationships between the analytic  Fourier--Feynman transform (FFT) 
and the  convolution product (CP) for  functionals $F$ and $G$ on
$C_0[0,T]$,   as follows: 
\begin{equation}\label{eq:offt-ocp}
T_{q}^{(p)}\big((F*G)_q\big)(y) 
= T_{q}^{(p)}(F)\bigg(\frac{y}{\sqrt2}\bigg)
T_{q}^{(p)}(G)\bigg(\frac{y}{\sqrt2}\bigg)  
\end{equation}
and 
\begin{equation}\label{eq:ocp-offt}
\big(T_{q}^{(p)}(F)*T_{q}^{(p)}(G)\big)_{-q}(y) 
= T_{q}^{(p)}\bigg(F\bigg(\frac{\cdot}{\sqrt2}\bigg)G \bigg(\frac{\cdot}{\sqrt2}\bigg)\bigg)(y)  
\end{equation}
for scale-almost every $y\in C_{0}[0,T]$,
where $T_{q}^{(p)}(F)$ and $(F*G)_q$ denote the $L_p$ analytic FFT and the CP
of functionals $F$ and $G$ on $C_0[0,T]$.
For an elementary introduction of the FFT and the corresponding CP, see \cite{SS04}. 
 
\par
For $f\in L_2(\mathbb R)$, let the Fourier transform of $f$ be given by
\[
\mathcal{F}(f)(u)=\int_{\mathbb R}e^{iuv}f(v)dm_L^{\mathfrak{n}}(v)
\]
and for $f, g\in L_2(\mathbb R)$, let the convolution of $f$ and $g$ be 
given by
\[
(f*g)(u)=\int_{\mathbb R} f(u-v)g(v)dm_L^{\mathfrak{n}}(v)
\]
where $dm_L^{\mathfrak{n}} (v)$ denotes the normalized Lebesgue measure 
$(2\pi)^{-1/2}dv$ on $\mathbb R$. As commented in \cite{Indag}, the Fourier 
transform  $\mathcal{F}$ acts like a homomorphism with convolution $*$ 
and ordinary multiplication on $L_2(\mathbb R)$ as follows:
 for $f, g \in L_2(\mathbb R)$ 
\begin{equation}\label{worthy}
\mathcal{F}(f*g)=\mathcal{F}(f)\mathcal{F}(g).
\end{equation}
But the Fourier transform  $\mathcal{F}$ and the convolution $*$  have a dual 
property such as 
\begin{equation}\label{eq:F-02}
\mathcal{F}(f)*\mathcal{F}(g)=\mathcal{F}(f g).
\end{equation}
Equations \eqref{eq:offt-ocp} and \eqref{eq:ocp-offt} above 
are  natural extensions (to the case on an infinite dimensional Banach space) 
of the equations \eqref{worthy} and \eqref{eq:F-02}, respectively.

\par
In \cite{CCKSY05,HPS97-2}, the authors extended the relationships \eqref{eq:offt-ocp}
and \eqref{eq:ocp-offt} to the cases between the generalized FFT (GFFT) 
and the generalized CP (GCP) of functionals on $C_0[0,T]$.
The definition of the ordinary FFT and the corresponding CP are based on the Wiener integral,
see \cite{HPS95,HPS96,HPS97-1}. While the definition of the GFFT and the GCP studied in \cite{CCKSY05,HPS97-2}
are  based on the generalized Wiener integral \cite{CPS93,PS91}.
The generalized Wiener integral (associated with Gaussian process) 
was defined  by  
$\int_{C_0[0,T]} F(\mathcal Z_h(x,\cdot))d\mathfrak{m}(x)$
where $\mathcal Z_h$ is the Gaussian process on $C_0[0,T]\times[0,T]$ given by
$\mathcal Z_h (x,t)=\int_0^t h(s)\tilde{d}x(s)$,
and where $h$ is a nonzero function in $L_2[0,T]$ and 
$\int_0^t h(s)\tilde{d}x(s)$ 
denotes the Paley--Wiener--Zygmund stochastic integral \cite{PWZ33,Park69,PS88}. 

On the other hand, in \cite{Indag}, the authors defined a more general  CP (see, Definition \ref{def:cp}
below) and  developed the   relationship, such as \eqref{eq:offt-ocp},
between  their GFFT and the GCP (see, Theorem \ref{thm:gfft-gcp-compose} below).
Equation  \eqref{eq:gfft-gcp} in  Theorem \ref{thm:gfft-gcp-compose} is useful
in that it permits one to calculate the GFFT of the GCP of functionals on $C_0[0,T]$
without actually calculating the GCP.

In this paper we work with the second relationship,
such as  equation \eqref{eq:ocp-offt}, between  the GFFT and the GCP of 
functionals  on $C_0[0,T]$.
Our new results  corresponds  to equation  \eqref{eq:F-02} rather 
than equation \eqref{worthy}.
It turns out, as noted in Remark \ref{re:meaning-main} below, that
our second relationship between the GFFT and the CP  also permits one to calculate the GCP of 
the GFFT of functionals on $C_0[0,T]$ without actually calculating the GCP.

\setcounter{equation}{0}
\section{Preliminaries}\label{sec:introduction}
 
\par
In order to present our  relationship between  the GFFT and the GCP, 
we follow the exposition of \cite{Indag}.

\par
A subset $B$ of $C_0[0,T]$ is said to be  scale-invariant measurable
provided $\rho B\in \mathcal{M}$ for all $\rho>0$, and a  scale-invariant 
measurable set $N$ is said to be scale-invariant null provided $\mathfrak{m}(\rho N)=0$ 
for all $\rho>0$. A property that holds except on  a scale-invariant  null set 
is said to hold  scale-invariant almost everywhere (s-a.e.). 
A functional $F$ is said to be scale-invariant measurable provided $F$ is defined on a scale-invariant
measurable set and $F(\rho\,\cdot\,)$ is Wiener-measurable for every $\rho> 0$.
If two functionals 
$F$ and $G$ are equal s-a.e.,  we write $F\approx G$.

\par
Let $\mathbb C$, $\mathbb C_+$ and $\mathbb{\widetilde C}_+$ denote the set of  
complex numbers, complex numbers with positive real part   and  nonzero complex 
numbers with nonnegative real part, respectively. For each $\lambda \in \mathbb C$,
$\lambda^{1/2}$ denotes the principal square root of $\lambda$; i.e., $\lambda^{1/2}$ 
is always chosen to have positive real part, so that  
$\lambda^{-1/2}=(\lambda^{-1})^{1/2}$ is  in $\mathbb C_+$ for 
all $\lambda\in\widetilde{\mathbb C}_+$.

\par
Let $h$ be a function in $L_2[0,T]\setminus\{0\}$ and let $F$ be 
a $\mathbb C$-valued scale-invariant measurable functional on $C_0[0,T]$ 
such that 
\[
\int_{C_0[0,T]} F\big(\lambda^{-1/2}\mathcal Z_h(x,\cdot)\big)d\mathfrak{m}(x) 
=J(h;\lambda)
\]
exists as a finite number for all $\lambda>0$. If there exists a function 
$J^* (h;\lambda)$ analytic on $\mathbb C_+$ such that  
$J^*(h;\lambda)=J(h;\lambda)$ for all $\lambda>0$, then  $J^*(h;\lambda)$ 
is defined to be the generalized analytic  Wiener integral (associated with 
the Gaussian process $\mathcal{Z}_h$) of $F$  over $C_0[0,T]$ with parameter $\lambda$, 
and for $\lambda \in \mathbb C_+$ we write
\[
\int_{C_0[0,T]}^{\mathrm{anw}_{\lambda}} 
F\big(\mathcal{Z}_h(x,\cdot)\big)d\mathfrak{m}(x)
= J^*(h;\lambda).
\]
Let $q\ne 0$ be a real number and let $F$ be a functional such that
\[
\int_{C_0[0,T]}^{\mathrm{anw}_{\lambda}} F\big(\mathcal{Z}_h(x,\cdot)\big)d\mathfrak{m}(x)
\] 
exists for all $\lambda \in \mathbb C_+$. If the following limit exists, we call
it the generalized analytic Feynman integral of $F$ with parameter $q$ and we 
write
\[
\int_{C_0[0,T]}^{\mathrm{anf}_{q}} 
F\big(\mathcal{Z}_h(x,\cdot)\big)d\mathfrak m(x) 
= \lim_{\substack{
\lambda\to -iq \\  \lambda\in \mathbb C_+}}
\int_{C_0[0,T]}^{\mathrm{anw}_{\lambda}} 
F\big(\mathcal{Z}_h(x,\cdot)\big)d\mathfrak m(x).
\]

\par 
Next (see \cite{CCKSY05,Indag,HPS97-2}) we  state the definition of the GFFT.

\renewcommand{\thedefinition}{\thesection.1}
\begin{definition} 
Let $h$ be a function in $L_2[0,T]\setminus\{0\}$.  For $\lambda\in\mathbb{C}_+$ 
and $y \in C_{0}[0,T]$, let
\[
T_{\lambda,h}(F)(y)
=\int_{C_0[0,T]}^{\mathrm{anw}_{\lambda}} 
F\big(y+\mathcal{Z}_h(x,\cdot)\big)d\mathfrak{m}(x). 
\]
For $p\in (1,2]$ we define the $L_p$ analytic GFFT (associated with the Gaussian process  $\mathcal{Z}_h$), 
$T^{(p)}_{q,h}(F)$ of $F$, by the formula,
\[
T^{(p)}_{q,h}(F)(y)
=\operatorname*{l.i.m.}_{\substack{
\lambda\to -iq \\  \lambda\in \mathbb C_+}}
T_{\lambda,h} (F)(y)    				 
\]
if it exists; i.e.,  for each $\rho>0$,
\[
\lim_{\substack{
\lambda\to -iq \\  \lambda\in \mathbb C_+}}
\int_{C_{a,b}[0,T]}\big| T_{\lambda,h} (F)(\rho y)
   -T^{(p)}_{q, h }(F)(\rho y) \big|^{p'} 
d\mathfrak m (y)=0
\]
where $1/p+1/p' =1$. We define the $L_1$ analytic GFFT, $T_{q, h }^{(1)}(F)$ of $F$, 
by the formula  
\[
T_{q, h }^{(1)}(F)(y)
= \lim_{\substack{
\lambda\to -iq \\  \lambda\in \mathbb C_+}}
T_{\lambda,h} (F)(y)
\]
for s-a.e. $y\in C_0[0,T]$ whenever this limit exists.
\end{definition}

\par
We note that for $p \in [1,2]$, $T_{q,h}^{(p)}(F)$ is  defined only s-a.e..
We also note that if $T_{q,h}^{(p)}(F)$  exists  and if $F\approx G$, then 
$T_{q,h}^{(p)}(G)$ exists  and  $T_{q,h}^{(p)}(G)\approx T_{q,h }^{(p)}(F)$.
One can see that for each $h\in L_2[0,T]$, 
$T_{q,h}^{(p)}(F)\approx T_{q,-h}^{(p)}(F)$ since 
\[
\int_{C_0[0,T]}F(x)d\mathfrak{m}(x)=\int_{C_0[0,T]}F(-x)d\mathfrak{m}(x).
\]

\renewcommand{\theremark}{\thesection.2}
\begin{remark}\label{remark:ordinary-fft}
Note that if $h\equiv 1$ on $[0,T]$, then the generalized analytic Feynman 
integral and the  $L_p$ analytic  GFFT, $T_{q,1}^{(p)}(F)$, agree 
with the previous definitions of the analytic Feynman integral and the analytic 
FFT, $T_{q}^{(p)}(F)$, respectively \cite{HPS95,HPS96,HPS97-1,PSS98}  because 
$\mathcal Z_1(x,\cdot)=x$ for all $x \in C_0[0,T]$.
\end{remark}

\par
Next (see \cite{Indag}) we give the definition of our GCP.

\renewcommand{\thedefinition}{\thesection.3}
\begin{definition}\label{def:cp}
Let $F$ and $G$ be  scale-invariant measurable functionals on $C_{0}[0,T]$.
For $\lambda \in \widetilde{\mathbb C}_+$ and $h_1,h_2\in L_2[0,T]\setminus\{0\}$, 
we define their GCP with respect to $\{\mathcal{Z}_{h_1},\mathcal{Z}_{h_2}\}$ 
(if it exists) by
\begin{equation}\label{eq:gcp-Z}
\begin{aligned}
(F*G)_{\lambda}^{(h_1,h_2)}(y)
=
\begin{cases}
\int_{C_0[0,T]}^{\mathrm{ anw}_{\lambda}} 
F\big(\frac{y+{\mathcal Z}_{h_1} (x,\cdot)}{\sqrt2}\big)
G\big(\frac{y-{\mathcal Z}_{h_2} (x,\cdot)}{\sqrt2}\big)d \mathfrak m(x),  
                     \quad    \lambda \in \mathbb C_+ \\
\int_{C_0[0,T]}^{\mathrm{ anf}_{q}} 
F\big(\frac{y+{\mathcal Z}_{h_1} (x,\cdot)}{\sqrt2}\big)
G\big(\frac{y-{\mathcal Z}_{h_2} (x,\cdot)}{\sqrt2}\big)d \mathfrak{m}(x),\\
           \qquad \qquad \qquad  \qquad \qquad 
            \qquad    \lambda=-iq,\,\, q\in \mathbb R, \,\,q\ne 0.
\end{cases} 
\end{aligned}
\end{equation}
When $\lambda =-iq$,  we denote $(F*G)_{\lambda}^{(h_1,h_2)}$ 
by $(F*G)_{q}^{(h_1,h_2)}$.
\end{definition}

\renewcommand{\theremark}{\thesection.4}
\begin{remark}\label{remark:ordinary-cp}
(i) Given  a function  $h$ in $L_2[0,T]\setminus\{0\}$  and letting 
$h_1=h_2\equiv h$, equation \eqref{eq:gcp-Z} yields the convolution
product   studied in \cite{CCKSY05,HPS97-2}: 
\[
\begin{aligned} 
(F*G)_{q}^{(h,h)}(y)&  
\equiv(F*G)_{q,h}(y)\\
&=\int_{C_0[0,T]}^{\mathrm{ anf}_{q}} 
F\bigg(\frac{y+ \mathcal Z_{h} (x,\cdot)}{\sqrt2}\bigg)
G\bigg(\frac{y- \mathcal Z_{h} (x,\cdot)}{\sqrt2}\bigg)d \mathfrak{m}(x) . 
\end{aligned}
\]

(ii) Choosing $h_1=h_2\equiv 1$,  equation \eqref{eq:gcp-Z}  yields 
the convolution product studied in \cite{HPS95,HPS96,HPS97-1,PSS98}:
\[
\begin{aligned}
(F*G)_{q}^{(1,1) }(y)
& \equiv (F*G)_{q}(y)\\
&
=\int_{C_0[0,T]}^{\mathrm{ anf}_{q}} 
F\bigg(\frac{y+ x}{\sqrt2}\bigg)
G\bigg(\frac{y- x}{\sqrt2}\bigg)d \mathfrak{m}(x).
\end{aligned}
\]
\end{remark}

\par
In order to establish our assertion we define the following conventions.
Let $h_1$ and $h_2$ be nonzero functions in $L_2[0,T]$. Then 
there exists a function  $\mathbf{s}\in L_2[0,T]$ such 
that
\begin{equation}\label{eq:fn-rot}
\mathbf{s}^2(t)=h_1^2(t)+h_2^2(t)
\end{equation}
for $m_L$-a.e. $t\in [0,T]$, where $m_L$ denotes  Lebesgue measure on $[0,T]$.
Note that  the function `$\mathbf{s}$' satisfying \eqref{eq:fn-rot} is not 
unique. We will use the symbol  $\mathbf{s}(h_1,h_2)$ for the functions 
`$\mathbf{s}$' that satisfy \eqref{eq:fn-rot} above. 
 Given nonzero functions  $h_1$ and $h_2$  in $L_{2}[0,T]$,
infinitely many functions, $\mathbf{s}(h_1,h_2)$, exist  in $L_{2}[0,T]$. 
Thus $\mathbf{s}(h_1,h_2)$  can be considered as  an equivalence class
of  the  equivalence relation $\sim$ on $L_2[0,T]$ given by 
\[
\mathbf{s}_1\sim \mathbf{s}_2 \,\,\Longleftrightarrow\,\, \mathbf{s}_1^2=\mathbf{s}_2^2
\,\,\,m_L\mbox{-a.e.}.
\]
But we observe  that for every function $\mathbf{s}$ in the 
equivalence class $\mathbf{s}(h_1,h_2)$, the Gaussian random 
variable {\color{red}${\mathcal {Z}}_{\mathbf{s}}(x,T)$} has the normal 
distribution $N(0,\|h_1\|_2^2+\|h_2\|_2^2)$.

Inductively, given a 
sequence   $\mathcal H=\{h_1,\ldots, h_n\}$ of nonzero functions in $L_2[0,T]$, 
let  $\mathbf{s}(\mathcal H)\equiv \mathbf{s}(h_1,h_2,\ldots,h_n)$
be  the equivalence class of  the   functions  $\mathbf{s}$ which satisfy the relation 
\[
\mathbf{s}^2(t)=h_1^2(t)+\cdots+h_n^2(t)
\]
for $m_L$-a.e. $t\in[0,T]$.
Throughout the rest of this paper, for convenience, we will regard
$\mathbf{s}(\mathcal H)$ as a function in $L_2[0,T]$. 
We note that if  the functions $h_1,\ldots, h_n$ are in $L_{\infty}[0,T]$,
then we can take  $\mathbf{s}(\mathcal H)$ to be in $L_{\infty}[0,T]$.
By an induction  argument it  follows that 
\[
\mathbf{s}(\mathbf{s}(h_1,h_2,\ldots,h_{k-1}),h_k)
=\mathbf{s}(h_1,h_2,\ldots,h_k) 
\]
for all $k\in\{2,\ldots,n\}$.

\renewcommand{\theexample}{\thesection.5}
\begin{example}
Let $h_1(t)=t^4$, $h_2(t)=\sqrt{2}t^3$, $h_3(t)=\sqrt{3}t^2$, $h_4(t)={\sqrt{2}}t$, $h_5(t)=1$, 
and $\mathbf{s}(t)=t^4 +t^2 +1$ for $t\in [0,T]$. 
Then $\mathcal H=\{h_1,h_2,h_3,h_4,h_5\}$ is a sequence of functions in $L_2[0,T]$
and it follows that
\[
\mathbf{s}^2(t)= h_1^2(t)+ h_2^2(t)+ h_3^2(t)+ h_4^2(t)+ h_5^2(t).
\]
Thus we can write $\mathbf{s}\equiv \mathbf{s}(h_1,h_2,h_3,h_4,h_5)$.
Furthermore, one can see that
\[
(-1)^{m}\mathbf{s}\equiv \mathbf{s}((-1)^{n_1}h_1,(-1)^{n_2}h_2,(-1)^{n_3}h_3,(-1)^{n_4}h_4,(-1)^{n_5}h_5) 
\]
with $m, n_1,n_2,n_3,n_4,n_5 \in \{1,2\}$.
On the other hand, it  also follows that
\[
\mathbf{s}(h_1,h_2,h_3,h_4,h_5)(t)\equiv \mathbf{s}(g_1,g_2,g_3) (t)
\]
for each $t\in [0,T]$,
where $g_1(t)=-t^4-1$, $g_2(t)={\sqrt2} t\sqrt{t^4+1}$, and $g_3(t)=t^2$ for $t\in [0,T]$.
 \end{example}

\renewcommand{\theexample}{\thesection.6}

\begin{example}
Let $h_1(t)=t^4+t^2$,  $h_2(t)=t^4-t^2$,  $h_3(t)=\sqrt2 t^3$, 
and  $\mathbf{s}(t)=\sqrt{2(t^8 +t^4)}$ for $t\in [0,T]$. 
Then, by the convention for $\mathbf{s}$, it follows that
\[
\mathbf{s}(t)\equiv\mathbf{s}(h_1,h_2) (t)
  \equiv \mathbf{s}({\sqrt2} h_2 ,{\sqrt2} h_3)(t).
\]
\end{example}

\renewcommand{\theexample}{\thesection.7}
\begin{example}
Using the well-known formulas for trigonometric and hyperbolic functions,
it follows that 
\[
\begin{aligned}
\sec \big(\tfrac{\pi}{4 T} t\big) 
&=\mathbf{s}\big(1, \tan\big(\tfrac{\pi}{4 T}  \cdot\big)\big)(t)\\
&=\mathbf{s}\big(\sin,\cos,\tan\big(\tfrac{\pi}{4 T}  \cdot\big)\big)(t) \\
&
=\mathbf{s}\big(\sin\big(\tfrac{\pi}{4 T}  \cdot\big),\cos\big(\tfrac{\pi}{4 T} 
 \cdot\big),\tan\big(\tfrac{\pi}{4 T}  \cdot\big)\big)(t),
\end{aligned}
\]
\[
\cosh t =\mathbf{s}(1, \sinh)(t)=\mathbf{s}(-1, \sinh)(t)=\mathbf{s}(\sin,\cos,\sinh)(t) ,
\]
and
\[
-\coth \big(t+\tfrac12\big) =\mathbf{s}\big(1, \mathrm{csch}\big(\cdot+\tfrac12\big)\big)(t)
=\mathbf{s}(-\sin,\cos,- \mathrm{csch}\big(\cdot+\tfrac12\big))(t) 
\]
for each $t\in [0,T]$.
\end{example}

%
\setcounter{equation}{0}
\section{The  relationship  between the GFFT and the GCP}

\par
The Banach algebra $\mathcal S(L_2[0,T])$ consists of functionals 
on $C_0[0,T]$ expressible in the form 
\begin{equation}\label{eq:element}
F(x)=\int_{L_2[0,T]}\exp\{i\langle{u,x}\rangle\}df(u)
\end{equation}
for s-a.e. $x\in C_0[0,T]$, where the associated measure $f$ is an 
element of $\mathcal M(L_2[0,T])$, the space of  $\mathbb C$-valued 
countably additive (and hence finite) Borel measures on $L_2[0,T]$, 
and the  pair $\langle{u,x}\rangle$ denotes the Paley--Wiener--Zygmund 
stochastic integral $\mathcal Z_u(x,T) \equiv \int_0^T u(s)\tilde{d}x(t)$. 
For more details, see \cite{CS80,CPS93,HPS97-2,PSS98}.
 
\par
We first present two  known results for the GFFT and the GCP 
of functionals in the Banach algebra $\mathcal S(L_2[0,T])$. 

\renewcommand{\thetheorem}{\thesection.1}
\begin{theorem}[\cite{HPS97-2}]\label{thm:gfft}
Let $h$ be a nonzero function in $L_\infty[0,T]$, and let 
$F\in\mathcal S(L_2[0,T])$ be given by equation \eqref{eq:element}. Then,  
for all $p\in[1,2]$, the $L_p$ analytic  GFFT, $T_{q,h}^{(p)}(F)$ of $F$ 
exists for all nonzero real numbers $q$, belongs to $\mathcal S(L_2[0,T])$, 
and is given by the formula
\[
T_{q,h}^{(p)}(F)(y)
= \int_{L_2[0,T]}\exp\{i\langle{u,y}\rangle\}df_t^h(u)
\]
for s-a.e. $y\in C_{0}[0,T]$, where  $f_t^h$ is the complex measure  in 
$\mathcal M(L_2[0,T])$ given by
\[
f_t^{h}(B)=\int_B \exp\bigg\{-\frac{i}{2q}\|uh\|_2^2\bigg\}df(u)
\]
for $B \in \mathcal B(L_2[0,T])$.
\end{theorem}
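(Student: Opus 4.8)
The plan is to compute $T_{\lambda,h}(F)(y)$ explicitly for real $\lambda>0$, extend the resulting expression analytically to $\lambda\in\mathbb{C}_+$, and then pass to the limit $\lambda\to -iq$. First I would insert the representation \eqref{eq:element} of $F$ into the defining Wiener integral. For $\lambda>0$, by definition of the generalized analytic Wiener integral one has
\[
T_{\lambda,h}(F)(y)=\int_{C_0[0,T]}\int_{L_2[0,T]}\exp\big\{i\langle u,y\rangle+i\lambda^{-1/2}\langle u,\mathcal{Z}_h(x,\cdot)\rangle\big\}\,df(u)\,d\mathfrak{m}(x).
\]
The crucial identity is $\langle u,\mathcal{Z}_h(x,\cdot)\rangle=\langle uh,x\rangle$, which holds for the Paley--Wiener--Zygmund integral and is exactly where the hypothesis $h\in L_\infty[0,T]$ is needed, since it guarantees $uh\in L_2[0,T]$ for every $u\in L_2[0,T]$. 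Because $|f|$ is a finite measure and the integrand has modulus one, Fubini's theorem applies, and interchanging the order of integration reduces the inner integral to a Gaussian expectation.

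Next I would evaluate that inner integral. Since $\langle uh,x\rangle=\mathcal{Z}_{uh}(x,T)$ is, under Wiener measure, a mean-zero Gaussian random variable with variance $\|uh\|_2^2$, the characteristic-function formula gives
\[
\int_{C_0[0,T]}\exp\big\{i\lambda^{-1/2}\langle uh,x\rangle\big\}\,d\mathfrak{m}(x)=\exp\Big\{-\frac{\|uh\|_2^2}{2\lambda}\Big\},
\]
so that for $\lambda>0$,
\[
T_{\lambda,h}(F)(y)=\int_{L_2[0,T]}\exp\{i\langle u,y\rangle\}\exp\Big\{-\frac{\|uh\|_2^2}{2\lambda}\Big\}\,df(u).
\]
I would then argue that the right-hand side, viewed as a function of $\lambda$, is analytic on $\mathbb{C}_+$ and agrees with $J(h;\lambda)$ on the positive reals, hence is the generalized analytic Wiener integral. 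Analyticity follows by differentiating under the integral sign (justified via Morera's theorem and dominated convergence), using that for $\lambda\in\mathbb{C}_+$ we have $\mathrm{Re}(1/\lambda)>0$, so the integrand is bounded in modulus by $1$ and dominated by the finite measure $|f|$.

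Finally, letting $\lambda\to -iq$ with $\lambda\in\mathbb{C}_+$ gives $1/\lambda\to i/q$ and hence the pointwise limit
\[
T_{q,h}^{(p)}(F)(y)=\int_{L_2[0,T]}\exp\{i\langle u,y\rangle\}\exp\Big\{-\frac{i}{2q}\|uh\|_2^2\Big\}\,df(u),
\]
which is the asserted formula with $df_t^h(u)=\exp\{-\frac{i}{2q}\|uh\|_2^2\}\,df(u)$. For $p=1$ this is already the required s-a.e.\ pointwise limit. For $p\in(1,2]$ I would upgrade pointwise convergence to the mean convergence demanded in the definition of the $L_p$ GFFT: since $|f|(L_2[0,T])<\infty$, each $T_{\lambda,h}(F)(\rho\,\cdot\,)$ is bounded uniformly in $\lambda$ by the total variation $\|f\|$, and Wiener measure is a probability measure, so the bounded convergence theorem yields convergence in $L_{p'}(C_0[0,T])$ for every finite $p'$.

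It remains to check membership in $\mathcal{S}(L_2[0,T])$. The density $u\mapsto\exp\{-\frac{i}{2q}\|uh\|_2^2\}$ is Borel measurable of modulus one, so $f_t^h$ is a complex Borel measure with $\|f_t^h\|\le\|f\|<\infty$; thus $f_t^h\in\mathcal{M}(L_2[0,T])$ and $T_{q,h}^{(p)}(F)\in\mathcal{S}(L_2[0,T])$. The main obstacle I anticipate is justifying the identity $\langle u,\mathcal{Z}_h(x,\cdot)\rangle=\langle uh,x\rangle$ together with the Fubini interchange; everything downstream is routine Gaussian computation and dominated/bounded convergence.
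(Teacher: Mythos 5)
Your argument is correct and is essentially the standard proof of this result; the paper itself states Theorem 3.1 without proof, citing Huffman--Park--Skoug \cite{HPS97-2}, and the computation there proceeds exactly as you describe (insert the representation \eqref{eq:element}, use $\langle u,\mathcal Z_h(x,\cdot)\rangle=\langle uh,x\rangle$ with $uh\in L_2[0,T]$ guaranteed by $h\in L_\infty[0,T]$, apply Fubini and the Gaussian characteristic function, continue analytically in $\lambda$, and pass to the limit $\lambda\to-iq$ using bounded convergence for the $L_{p'}$ mean convergence). No gaps worth flagging.
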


\renewcommand{\thetheorem}{\thesection.2}
\begin{theorem}[\cite{Indag}] \label{thm:gcp}
Let $k_1$ and $k_2$ be nonzero functions in $L_\infty[0,T]$ and let $F$ 
and $G$ be elements of $\mathcal S(L_2[0,T])$ with corresponding finite 
Borel measures $f$ and $g$ in $\mathcal M(L_2[0,T])$. Then, the GCP 
$(F*G)_q^{(k_1,k_2)}$ exists for all nonzero real $q$, belongs to 
$\mathcal S(L_2[0,T])$, and is given by the formula
\[
(F*G)_q^{(k_1,k_2)}(y)
= \int_{L_2[0,T]}\exp\{i\langle{w,y}\rangle\}d\varphi^{k_1,k_2}_c(w)
\]
for s-a.e. $y\in C_{0}[0,T]$, where  
\[
\varphi^{k_1,k_2}_c
=\varphi^{k_1,k_2}\circ\phi^{-1},
\]
$\varphi^{k_1,k_2}$ is the complex measure in $\mathcal M(L_2[0,T])$ given 
by
\[
\varphi_{k_1,k_2}(B)
=\int_B \exp\bigg\{-\frac{i}{4q}\|uk_1-vk_2\|_2^2\bigg\}df(u)dg(v)
\]
for $B \in \mathcal B(L_2^2[0,T])$, and $\phi:L_2^2[0,T]\to L_2[0,T]$ is the 
continuous function  given by $\phi(u,v)=(u+v)/\sqrt2$.
\end{theorem}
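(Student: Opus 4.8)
The plan is to substitute the Fourier--Stieltjes representations \eqref{eq:element} of $F$ and $G$ directly into the defining generalized Wiener integral \eqref{eq:gcp-Z}, carry out the (Gaussian) integration in $x$ explicitly, and then recognize the answer as an element of $\mathcal S(L_2[0,T])$. First I would work at a real parameter $\lambda>0$, where the generalized analytic Wiener integral reduces to an ordinary Wiener integral of $\Psi_y(\lambda^{-1/2}x)$ with
\[
\Psi_y(x)=F\Big(\tfrac{y+\mathcal Z_{k_1}(x,\cdot)}{\sqrt2}\Big)
G\Big(\tfrac{y-\mathcal Z_{k_2}(x,\cdot)}{\sqrt2}\Big).
\]
Using \eqref{eq:element} for both $F$ and $G$ together with the additivity of the PWZ integral and the key identity $\langle u,\mathcal Z_{k_1}(x,\cdot)\rangle=\langle uk_1,x\rangle$ (this is where the hypothesis $k_1,k_2\in L_\infty[0,T]$ enters, since it guarantees $uk_1,vk_2\in L_2[0,T]$), the integrand factors as
\[
\int_{L_2^2[0,T]}\exp\Big\{\tfrac{i}{\sqrt2}\langle u+v,y\rangle\Big\}
\exp\Big\{\tfrac{i}{\sqrt2}\lambda^{-1/2}\langle uk_1-vk_2,x\rangle\Big\}\,df(u)\,dg(v).
\]
Since $f$ and $g$ are finite measures and the inner exponential has modulus one for $\lambda>0$, Fubini's theorem lets me integrate in $x$ first.

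Second, I would evaluate the inner Wiener integral by the characteristic-function formula for the Gaussian variable $\langle uk_1-vk_2,x\rangle$, namely $\int_{C_0[0,T]}\exp\{i\alpha\langle\xi,x\rangle\}d\mathfrak m(x)=\exp\{-\tfrac{\alpha^2}{2}\|\xi\|_2^2\}$ with $\alpha=\lambda^{-1/2}/\sqrt2$ and $\xi=uk_1-vk_2$. This produces, for $\lambda>0$,
\[
\int_{L_2^2[0,T]}\exp\Big\{\tfrac{i}{\sqrt2}\langle u+v,y\rangle\Big\}
\exp\Big\{-\tfrac{1}{4\lambda}\|uk_1-vk_2\|_2^2\Big\}\,df(u)\,dg(v).
\]
I would then observe that, as a function of $\lambda$, this expression is analytic on $\mathbb C_+$: for $\lambda\in\mathbb C_+$ one has $\operatorname{Re}(1/\lambda)\ge 0$, so the $\lambda$-dependent factor is bounded by $1$ and dominated by the finite measure $|f|\times|g|$, whence analyticity follows by Morera's theorem and dominated convergence. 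By uniqueness of analytic continuation this analytic extension is exactly the generalized analytic Wiener integral, so the displayed formula holds verbatim for all $\lambda\in\mathbb C_+$.

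Third, I would pass to the Feynman limit $\lambda\to -iq$ through $\mathbb C_+$. Since $\operatorname{Re}(1/\lambda)\ge 0$ throughout $\mathbb C_+$, the integrand stays bounded by $1$ in modulus, so dominated convergence applies and I may take the limit inside the integral; using $1/\lambda\to i/q$ turns the Gaussian factor into $\exp\{-\tfrac{i}{4q}\|uk_1-vk_2\|_2^2\}$. This establishes existence of $(F*G)_q^{(k_1,k_2)}$ for every nonzero real $q$ and gives
\[
(F*G)_q^{(k_1,k_2)}(y)
=\int_{L_2^2[0,T]}\exp\{i\langle\phi(u,v),y\rangle\}\,d\varphi^{k_1,k_2}(u,v),
\]
where $\phi(u,v)=(u+v)/\sqrt2$ and $\varphi^{k_1,k_2}$ is the complex measure in the statement. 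Finally, rewriting the integral as a pushforward under $\phi$, that is, replacing $\varphi^{k_1,k_2}$ by $\varphi^{k_1,k_2}_c=\varphi^{k_1,k_2}\circ\phi^{-1}$, yields the claimed representation and simultaneously shows $(F*G)_q^{(k_1,k_2)}\in\mathcal S(L_2[0,T])$, because $\varphi^{k_1,k_2}_c$ is a finite complex Borel measure on $L_2[0,T]$ (its total variation is at most $\|f\|\,\|g\|$, the density having modulus one, and $\phi$ is continuous hence Borel measurable).

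I expect the main obstacle to be the middle step: rigorously justifying that the closed-form integral computed at $\lambda>0$ is the genuine analytic continuation defining the generalized analytic Wiener integral, and that the Feynman limit may legitimately be taken under the integral sign. Both reduce to the single uniform bound $|\exp\{-\tfrac{1}{4\lambda}\|uk_1-vk_2\|_2^2\}|\le 1$ on $\mathbb C_+$ against the finite product measure $|f|\times|g|$; once that bound is in place the analyticity (via Morera) and the passage to the limit (via dominated convergence) are routine. The hypothesis $k_1,k_2\in L_\infty[0,T]$ is precisely what keeps $uk_1-vk_2$ in $L_2[0,T]$ so that this bound, and indeed the entire computation, makes sense.
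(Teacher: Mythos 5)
Your proposal is correct and follows exactly the standard argument (the paper itself does not prove Theorem \ref{thm:gcp} but cites \cite{Indag}, where the proof proceeds in just this way): substitute the representations of $F$ and $G$, use $\langle u,\mathcal Z_{k_1}(x,\cdot)\rangle=\langle uk_1,x\rangle$, evaluate the Gaussian integral at $\lambda>0$, continue analytically to $\mathbb C_+$ via the bound $|\exp\{-\tfrac{1}{4\lambda}\|uk_1-vk_2\|_2^2\}|\le 1$, pass to the limit $\lambda\to-iq$, and read off the result as a pushforward measure under $\phi$. All the key justifications (Fubini against the finite product measure, Morera, dominated convergence, and the role of $k_1,k_2\in L_\infty[0,T]$ in keeping $uk_1,vk_2\in L_2[0,T]$) are correctly identified.
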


\par
The following corollary and theorem will be very useful to prove our main theorem
(namely, Theorem \ref{thm:cp-tpq02}) which  we establish 
the relationship between the GFFT and the GCP such as equation \eqref{eq:ocp-offt}. 
The following corollary is a simple consequence of Theorem \ref {thm:gfft}.

\renewcommand{\thecorollary}{\thesection.3}

\begin{corollary}\label{thm:afft-inverse}
Let $h$ and  $F$ be  as in Theorem \ref{thm:gfft}.
Then, for all $p\in[1,2]$,   and all nonzero real $q$, 
\begin{equation}\label{eq:inverse}
T_{-q, h}^{(p)}\big(T_{q,h}^{(p)}(F)\big)\approx F.
\end{equation}
As such, the  GFFT, $T_{q,h}^{(p)}$, has the 
inverse transform $\{T_{q,h}^{(p)}\}^{-1}=T_{-q,h}^{(p)}$.
\end{corollary}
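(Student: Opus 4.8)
The plan is to apply Theorem \ref{thm:gfft} twice and to observe that the two exponential factors introduced by the successive transforms cancel. First I would note that, by Theorem \ref{thm:gfft}, the transform $T_{q,h}^{(p)}(F)$ exists for all nonzero real $q$ and again belongs to the Banach algebra $\mathcal S(L_2[0,T])$; its associated measure is $f_t^h\in\mathcal M(L_2[0,T])$ determined by $df_t^h(u)=\exp\{-\frac{i}{2q}\|uh\|_2^2\}\,df(u)$. This membership is precisely what licenses a second application of the theorem.

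Next I would apply Theorem \ref{thm:gfft} to the functional $T_{q,h}^{(p)}(F)$, now with the parameter $-q$ in place of $q$ and with $f_t^h$ playing the role of the associated measure. The theorem then produces $T_{-q,h}^{(p)}\big(T_{q,h}^{(p)}(F)\big)$ as the element of $\mathcal S(L_2[0,T])$ whose associated measure $\mu$ is determined by $d\mu(u)=\exp\{-\frac{i}{2(-q)}\|uh\|_2^2\}\,df_t^h(u)=\exp\{\frac{i}{2q}\|uh\|_2^2\}\,df_t^h(u)$.

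The key step is then the cancellation. Substituting the formula for $df_t^h$ into the last expression gives $d\mu(u)=\exp\{\frac{i}{2q}\|uh\|_2^2\}\exp\{-\frac{i}{2q}\|uh\|_2^2\}\,df(u)=df(u)$, so the composite measure $\mu$ coincides with $f$. Since two functionals in $\mathcal S(L_2[0,T])$ sharing the same associated measure agree s-a.e., I would conclude that $T_{-q,h}^{(p)}\big(T_{q,h}^{(p)}(F)\big)(y)=\int_{L_2[0,T]}\exp\{i\langle u,y\rangle\}\,df(u)=F(y)$ for s-a.e. $y\in C_0[0,T]$, which is exactly equation \eqref{eq:inverse}.

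Finally, the inverse-transform assertion follows by symmetry: repeating the computation with the roles of $q$ and $-q$ interchanged yields $T_{q,h}^{(p)}\big(T_{-q,h}^{(p)}(F)\big)\approx F$ as well, so $T_{-q,h}^{(p)}$ is a genuine two-sided inverse of $T_{q,h}^{(p)}$ on $\mathcal S(L_2[0,T])$. I do not anticipate any real obstacle in this argument; the one point meriting a word of care is the appeal to Theorem \ref{thm:gfft} to guarantee that $T_{q,h}^{(p)}(F)$ lands back in $\mathcal S(L_2[0,T])$, without which the second application of the theorem would not be available.
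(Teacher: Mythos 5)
Your proposal is correct and is precisely the argument the paper intends: the paper gives no explicit proof, merely noting that the corollary is "a simple consequence of Theorem \ref{thm:gfft}", and the intended simple consequence is exactly your double application of that theorem with parameters $q$ and $-q$, whereupon the factors $\exp\{-\frac{i}{2q}\|uh\|_2^2\}$ and $\exp\{\frac{i}{2q}\|uh\|_2^2\}$ cancel and the composite transform is represented by the original measure $f$, hence equals $F$ s-a.e.
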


\par
The following theorem is due to Chang, Chung and Choi \cite{Indag}.

\renewcommand{\thetheorem}{\thesection.4}

\begin{theorem} \label{thm:gfft-gcp-compose}
Let $k_1$, $k_2$, $F$,  and $G$ be as in Theorem \ref{thm:gcp},
and let $h$ be a  nonzero function in $L_\infty[0,T]$. Assume that $h^2=k_1k_2$ $m_L$-a.e. 
on $[0,T]$. Then,  for all $p\in[1,2]$ and all nonzero real $q$, 
\begin{equation}\label{eq:gfft-gcp}
\begin{aligned}
&T_{q,h}^{(p)}\big((F*G)_q^{(k_1,k_2)}\big)(y) \\
& = T_{q,\mathbf{s}(h,k_1)/\sqrt2}^{(p)}(F)\bigg(\frac{y}{\sqrt2}\bigg)
T_{q,\mathbf{s}(h,k_2)/\sqrt2}^{(p)}(G)\bigg(\frac{y}{\sqrt2}\bigg)  
\end{aligned}
\end{equation}
for s-a.e. $y\in C_{0}[0,T]$, where 
$\mathbf{s}(h,k_j)$'s, $j\in \{1,2\}$, are the functions which satisfy the relation
\eqref{eq:fn-rot}, respectively. 
\end{theorem}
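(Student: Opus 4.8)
The plan is to chain the two known closed forms, Theorem \ref{thm:gcp} and Theorem \ref{thm:gfft}, and then reduce the claimed identity to a single pointwise algebraic identity among the Gaussian-type exponents; the hypothesis $h^2=k_1k_2$ will enter at exactly one place. First I would observe that existence is automatic: since $F,G\in\mathcal S(L_2[0,T])$, Theorem \ref{thm:gcp} gives that $(F*G)_q^{(k_1,k_2)}$ again lies in $\mathcal S(L_2[0,T])$ with associated measure $\varphi_c^{k_1,k_2}=\varphi^{k_1,k_2}\circ\phi^{-1}$, so its GFFT exists by Theorem \ref{thm:gfft}; and because $h,k_1,k_2\in L_\infty[0,T]$ we may take $\mathbf{s}(h,k_1),\mathbf{s}(h,k_2)\in L_\infty[0,T]$, so the two transforms on the right-hand side also exist by Theorem \ref{thm:gfft}. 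Applying Theorem \ref{thm:gfft} to $(F*G)_q^{(k_1,k_2)}$ yields
\[
T_{q,h}^{(p)}\big((F*G)_q^{(k_1,k_2)}\big)(y)
=\int_{L_2[0,T]}\exp\{i\langle w,y\rangle\}\exp\Big\{-\frac{i}{2q}\|wh\|_2^2\Big\}\,d\varphi_c^{k_1,k_2}(w).
\]

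Next I would undo the push-forward by $\phi(u,v)=(u+v)/\sqrt2$, rewriting this as an integral over $L_2^2[0,T]$ against $\varphi^{k_1,k_2}$, i.e.\ against $df(u)\,dg(v)$ together with the factor $\exp\{-\frac{i}{4q}\|uk_1-vk_2\|_2^2\}$ from Theorem \ref{thm:gcp}. Using the linearity of the Paley--Wiener--Zygmund pairing, $\langle(u+v)/\sqrt2,\,y\rangle=\langle u,y/\sqrt2\rangle+\langle v,y/\sqrt2\rangle$, and $\|(u+v)h/\sqrt2\|_2^2=\frac12\|(u+v)h\|_2^2$, the two quadratic exponents combine into
\[
-\frac{i}{4q}\int_0^T\Big[(u(t)+v(t))^2h^2(t)+(u(t)k_1(t)-v(t)k_2(t))^2\Big]\,dt.
\]

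The heart of the argument is the pointwise expansion of the bracket, which equals $u^2(h^2+k_1^2)+v^2(h^2+k_2^2)+2uv(h^2-k_1k_2)$; here the hypothesis $h^2=k_1k_2$ kills the cross term, leaving $u^2\mathbf{s}(h,k_1)^2+v^2\mathbf{s}(h,k_2)^2$ by \eqref{eq:fn-rot}. Thus the combined exponent separates as $-\frac{i}{4q}\big(\|u\,\mathbf{s}(h,k_1)\|_2^2+\|v\,\mathbf{s}(h,k_2)\|_2^2\big)$, and since $\frac{1}{4q}\|u\,\mathbf{s}(h,k_j)\|_2^2=\frac{1}{2q}\|u\,\mathbf{s}(h,k_j)/\sqrt2\|_2^2$, the integrand factors cleanly into a $u$-part and a $v$-part. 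By Fubini (the measures being finite) the double integral becomes the product of $\int_{L_2[0,T]}\exp\{i\langle u,y/\sqrt2\rangle-\frac{i}{2q}\|u\,\mathbf{s}(h,k_1)/\sqrt2\|_2^2\}\,df(u)$ and the analogous $v$-integral, which by Theorem \ref{thm:gfft} are precisely $T_{q,\mathbf{s}(h,k_1)/\sqrt2}^{(p)}(F)(y/\sqrt2)$ and $T_{q,\mathbf{s}(h,k_2)/\sqrt2}^{(p)}(G)(y/\sqrt2)$, giving \eqref{eq:gfft-gcp}.

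I expect the genuinely delicate points to be bookkeeping rather than conceptual: correctly undoing the push-forward through $\phi$ and keeping the $\sqrt2$-scalings consistent between the PWZ pairing and the $L_2$-norms. The interchange of the defining l.i.m.\ limit with these manipulations is not really an obstacle, since $\mathcal S(L_2[0,T])$ is closed under both the GCP and the GFFT and the explicit representing measures furnished by Theorems \ref{thm:gcp} and \ref{thm:gfft} let me argue entirely at the level of those measures. The one essential input is the algebraic cancellation of the cross term, which is exactly the role played by $h^2=k_1k_2$.
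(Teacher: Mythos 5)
Your argument is correct: pushing the transform measure back through $\phi$, combining the quadratic exponents, and using $h^2=k_1k_2$ to kill the cross term $2uv(h^2-k_1k_2)$ so that the double integral factors via Fubini is exactly the computation that establishes \eqref{eq:gfft-gcp}, and all the $\sqrt2$ bookkeeping checks out. Note that the paper itself offers no proof of Theorem \ref{thm:gfft-gcp-compose} --- it is quoted from \cite{Indag} --- but your derivation is the natural one from the representing-measure formulas of Theorems \ref{thm:gfft} and \ref{thm:gcp}, which is precisely the machinery the paper sets up for this purpose, so there is nothing further to flag.
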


\renewcommand{\theremark}{\thesection.5}

\begin{remark}
In equation \eqref{eq:gfft-gcp}, choosing $h=k_1=k_2\equiv 1$
yields equation \eqref{eq:offt-ocp} above.
Also, letting  $h=k_1=k_2$ yields the results studied in \cite{CCKSY05,HPS97-2}.
As mentioned above, equation \eqref{eq:gfft-gcp} is  a more  general extension of equation \eqref{worthy}
to the  case on an infinite dimensional Banach space.  
\end{remark}

\par
We are now ready to establish our  main theorem in this paper.

\renewcommand{\thetheorem}{\thesection.6}

\begin{theorem}    \label{thm:cp-tpq02}
Let $k_1$, $k_2$, $F$,  $G$, and $h$ be as in Theorem  \ref{thm:gfft-gcp-compose}.
Then,  for all $p\in[1,2]$ and all nonzero real $q$, 
\begin{equation}\label{eq:cp-fft-basic}
\begin{aligned}
&\Big(T_{q,\mathbf{s}(h,k_1)/\sqrt{2} }^{(p)} (F)*
 T_{q,\mathbf{s}(h,k_2)/\sqrt{2}}^{(p)}(G) \Big)_{-q}^{(k_1,k_2)}(y) \\
&=T_{q,h}^{(p)} \bigg(F \bigg(\frac{\cdot}{\sqrt2} \bigg)
 G \bigg(\frac{\cdot}{\sqrt2}\bigg)\bigg)(y)
\end{aligned}
\end{equation}
for s-a.e. $y\in C_0[0,T]$, where 
$\mathbf{s}(h,k_j)$'s, $j\in \{1,2\}$, are the functions which satisfy the relation
\eqref{eq:fn-rot}, respectively.
\end{theorem}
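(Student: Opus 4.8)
The plan is to deduce \eqref{eq:cp-fft-basic} from the first relationship \eqref{eq:gfft-gcp} of Theorem \ref{thm:gfft-gcp-compose} by composing with the inverse transform furnished by Corollary \ref{thm:afft-inverse}, rather than computing the convolution side from scratch. Abbreviate $\Phi = T_{q,\mathbf{s}(h,k_1)/\sqrt2}^{(p)}(F)$ and $\Psi = T_{q,\mathbf{s}(h,k_2)/\sqrt2}^{(p)}(G)$. Because $h,k_1,k_2\in L_\infty[0,T]$, the representatives $\mathbf{s}(h,k_j)$ of the classes determined by \eqref{eq:fn-rot} may be chosen in $L_\infty[0,T]$, so Theorem \ref{thm:gfft} shows $\Phi,\Psi\in\mathcal S(L_2[0,T])$; Theorem \ref{thm:gcp} then gives that $(\Phi*\Psi)_{-q}^{(k_1,k_2)}$ exists and lies in $\mathcal S(L_2[0,T])$, while the Banach-algebra structure of $\mathcal S(L_2[0,T])$ (stable under the scaling $x\mapsto x/\sqrt2$) puts $F(\cdot/\sqrt2)G(\cdot/\sqrt2)$ in $\mathcal S(L_2[0,T])$ as well, so that the right member of \eqref{eq:cp-fft-basic} is defined through Theorem \ref{thm:gfft}. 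Both sides of \eqref{eq:cp-fft-basic} are thus bona fide elements of $\mathcal S(L_2[0,T])$, and it suffices to prove they coincide s-a.e.

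First I would apply \eqref{eq:gfft-gcp} with $q$ replaced by $-q$ and with $(F,G)$ replaced by $(\Phi,\Psi)$; the hypothesis $h^2=k_1k_2$ is untouched and $-q$ is again a nonzero real, so Theorem \ref{thm:gfft-gcp-compose} applies verbatim and yields
\[
T_{-q,h}^{(p)}\big((\Phi*\Psi)_{-q}^{(k_1,k_2)}\big)(y)
= T_{-q,\mathbf{s}(h,k_1)/\sqrt2}^{(p)}(\Phi)\bigg(\frac{y}{\sqrt2}\bigg)\,
  T_{-q,\mathbf{s}(h,k_2)/\sqrt2}^{(p)}(\Psi)\bigg(\frac{y}{\sqrt2}\bigg).
\]
Now Corollary \ref{thm:afft-inverse}, read with $q$ replaced by $-q$ and $h$ replaced by $\mathbf{s}(h,k_1)/\sqrt2$ (resp.\ $\mathbf{s}(h,k_2)/\sqrt2$), collapses each factor: since $\Phi=T_{q,\mathbf{s}(h,k_1)/\sqrt2}^{(p)}(F)$ we get $T_{-q,\mathbf{s}(h,k_1)/\sqrt2}^{(p)}(\Phi)\approx F$, and likewise $T_{-q,\mathbf{s}(h,k_2)/\sqrt2}^{(p)}(\Psi)\approx G$. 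Inserting these (a product of two s-a.e.\ identities is again one) turns the display into
\[
T_{-q,h}^{(p)}\big((\Phi*\Psi)_{-q}^{(k_1,k_2)}\big)(y)
\approx F\bigg(\frac{y}{\sqrt2}\bigg)\,G\bigg(\frac{y}{\sqrt2}\bigg).
\]

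To conclude, I would apply $T_{q,h}^{(p)}$ to both sides of this last approximation. On the left, Corollary \ref{thm:afft-inverse} in the form $T_{q,h}^{(p)}\circ T_{-q,h}^{(p)}\approx\mathrm{id}$ (again \eqref{eq:inverse} with $q$ replaced by $-q$) returns $(\Phi*\Psi)_{-q}^{(k_1,k_2)}$; on the right it produces precisely $T_{q,h}^{(p)}\big(F(\cdot/\sqrt2)G(\cdot/\sqrt2)\big)$. Equating the two outcomes is exactly \eqref{eq:cp-fft-basic}. Throughout, the propagation of the relation $\approx$ across a transform is legitimate because, as noted just after the definition of the $L_p$ analytic GFFT, $F\approx G$ forces $T_{q,h}^{(p)}(F)\approx T_{q,h}^{(p)}(G)$.

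I expect the principal difficulty to be organizational rather than analytic: one must confirm that every transform in the chain is legitimately defined and that the hypotheses of Theorems \ref{thm:gfft}--\ref{thm:gfft-gcp-compose} remain in force after the substitutions $q\mapsto-q$ and $(F,G)\mapsto(\Phi,\Psi)$ — the genuinely structural ingredient being $h^2=k_1k_2$, which is what lets Theorem \ref{thm:gfft-gcp-compose} be reused with the same pair $(k_1,k_2)$ and the same $h$. As an independent check one can compute both members directly from Theorems \ref{thm:gfft} and \ref{thm:gcp}: writing $F(\cdot/\sqrt2)G(\cdot/\sqrt2)$ through the pushforward of $f\times g$ by $\phi(u,v)=(u+v)/\sqrt2$ and using the identity $\langle uk_1,vk_2\rangle=\langle uh,vh\rangle$ (once more a consequence of $h^2=k_1k_2$), the convolution-side exponent completes to $-\tfrac{i}{4q}\|(u+v)h\|_2^2$, matching the exponent generated by $T_{q,h}^{(p)}$ acting on $F(\cdot/\sqrt2)G(\cdot/\sqrt2)$.
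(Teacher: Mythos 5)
Your argument is correct and is essentially the paper's own proof: both rest on Corollary \ref{thm:afft-inverse} combined with Theorem \ref{thm:gfft-gcp-compose} applied to $T_{q,\mathbf{s}(h,k_1)/\sqrt2}^{(p)}(F)$, $T_{q,\mathbf{s}(h,k_2)/\sqrt2}^{(p)}(G)$ and $-q$, differing only in that you first compute $T_{-q,h}^{(p)}$ of the convolution and then apply $T_{q,h}^{(p)}$, whereas the paper inserts the composition $T_{q,h}^{(p)}\circ T_{-q,h}^{(p)}$ at the outset and expands inward. Your added membership checks in $\mathcal S(L_2[0,T])$ and the direct verification via the measures of Theorems \ref{thm:gfft} and \ref{thm:gcp} are sound but do not change the route.
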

\begin{proof}
Applying \eqref{eq:inverse}, \eqref{eq:gfft-gcp} with $F$, $G$, and $q$
replaced with $T_{q,\mathbf{s}(h,k_1)/\sqrt{2} }^{(p)} (F)$, 
$T_{q,\mathbf{s}(h,k_2)/\sqrt{2}}^{(p)}(G)$,
and $-q$, respectively, and \eqref{eq:inverse} again, 
it follows that for s-a.e. $y\in C_0[0,T]$,
\[
\begin{aligned}
&\Big(T_{q,\mathbf{s}(h,k_1)/\sqrt{2} }^{(p)} (F)*
 T_{q,\mathbf{s}(h,k_2)/\sqrt{2}}^{(p)}(G) \Big)_{-q}^{(k_1,k_2)}(y)\\
&= T_{q,h}^{(p)}\Big(T_{-q,h}^{(p)}\Big(\big(T_{q,\mathbf{s}(h,k_1)/\sqrt{2} }^{(p)} (F)*
 T_{q,\mathbf{s}(h,k_2)/\sqrt{2}}^{(p)}(G) \big)_{-q}^{(k_1,k_2)}\Big)\Big)(y)\\
&= T_{q,h}^{(p)}\bigg(
T_{-q,\mathbf{s}(h,k_1)/\sqrt2}^{(p)}\Big(T_{q,\mathbf{s}(h,k_1)/\sqrt{2} }^{(p)}(F)\Big)
\bigg(\frac{\cdot}{\sqrt2}\bigg)\\
&\qquad\quad\times
T_{-q,\mathbf{s}(h,k_2)/\sqrt2}^{(p)}\Big(T_{q,\mathbf{s}(h,k_2)/\sqrt{2} }^{(p)}(G)\Big)
\bigg(\frac{\cdot}{\sqrt2}\bigg)\bigg)
(y)\\
&=T_{q,h}^{(p)} \bigg(F \bigg(\frac{\cdot}{\sqrt2} \bigg)
 G \bigg(\frac{\cdot}{\sqrt2}\bigg)\bigg)(y)
\end{aligned}
\]
as desired.
\qed\end{proof}

\renewcommand{\theremark}{\thesection.7}

\begin{remark}\label{re:meaning-main}
(i) Equation \eqref{eq:gfft-gcp}  shows that the GFFT of the GCP of two functionals 
is the ordinary product of their transforms.
On the other hand, equation \eqref{eq:cp-fft-basic}  above shows that 
the GCP of GFFTs of functionals is the GFFT of product of the functionals. 
These equations are useful in that they permit one to calculate 
$T_{q,h}^{(p)}((F*G)_q^{(k_1,k_2)})$ and 
$(T_{q,\mathbf{s}(h,k_1)/\sqrt{2} }^{(p)} (F)* T_{q,\mathbf{s}(h,k_2)/\sqrt{2}}^{(p)}(G))_{-q}^{(k_1,k_2)}$
without actually calculating the GCPs involved them, respectively. 
In practice, equation \eqref{eq:cp-fft-basic} tells us that
to calculate $T_{q,h}^{(p)}(F(\frac{\cdot}{\sqrt2}) G(\frac{\cdot}{\sqrt2} ))$
is easier to calculate
than are   $T_{q,\mathbf{s}(h,k_1)/\sqrt{2} }^{(p)} (F)$,
$T_{q,\mathbf{s}(h,k_1)/\sqrt{2} }^{(p)} (G)$, and
$(T_{q,\mathbf{s}(h,k_1)/\sqrt{2} }^{(p)} (F)* T_{q,\mathbf{s}(h,k_2)/\sqrt{2}}^{(p)}(G) )_{-q}^{(k_1,k_2)}$.

(ii) Equation \eqref{eq:cp-fft-basic}  is  a more  general extension of equation \eqref{eq:F-02}
to the  case on an infinite dimensional Banach space.  
\end{remark}

\renewcommand{\thecorollary}{\thesection.8}

\begin{corollary}[Theorem  3.1  in \cite{PSS98}] 
Let $F$ and $G$ be as in Theorem  \ref{thm:gcp}. Then, for all $p\in[1,2]$ 
and all real $q\in\mathbb R\setminus\{0\}$,
\[
\Big( T_q^{(p)}(F)*T_q^{(p)}(G)\Big)_{-q} (y)
=T_q^{(p)}\bigg(F \bigg(\frac{\cdot}{\sqrt2}\bigg)
 G \bigg(\frac{\cdot}{\sqrt2}\bigg) \bigg)(y)
\]
for s-a.e. $y\in C_{0}[0,T]$, where $T_q^{(p)}(F)$ denotes the ordinary 
analytic FFT of $F$  and $(F*G)_q$ 
denotes the CP of $F$ and $G$  (see   Remarks \ref{remark:ordinary-fft} 
and \ref{remark:ordinary-cp}).
\end{corollary}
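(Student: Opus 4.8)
The plan is to derive this corollary as the special case $h = k_1 = k_2 \equiv 1$ of Theorem \ref{thm:cp-tpq02}. First I would check that this choice is admissible: the constant function $1$ belongs to $L_\infty[0,T]\setminus\{0\}$, and the hypothesis $h^2 = k_1 k_2$ inherited from Theorem \ref{thm:gfft-gcp-compose} holds trivially because $1 = 1\cdot 1$ $m_L$-a.e. on $[0,T]$. Thus all functionals and weight functions appearing in Theorem \ref{thm:cp-tpq02} are legitimately instantiated under this choice.

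Next I would compute the weight functions $\mathbf{s}(h,k_j)$ that occur in \eqref{eq:cp-fft-basic}. With $h = k_1 = k_2 \equiv 1$, the defining relation \eqref{eq:fn-rot} gives $\mathbf{s}(1,1)^2(t) = 1^2 + 1^2 = 2$ for $m_L$-a.e. $t \in [0,T]$, so the equivalence class $\mathbf{s}(h,k_j)$ is represented by the constant function $\sqrt{2}$; consequently $\mathbf{s}(h,k_j)/\sqrt{2} \equiv 1$ for each $j \in \{1,2\}$. To justify replacing the abstract class by this representative, I would invoke the transform formula of Theorem \ref{thm:gfft}: the associated measure depends on the weight only through $\|uh\|_2^2 = \int_0^T u(s)^2 h(s)^2\,ds$, i.e. only through $h^2$. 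Hence $T_{q,\mathbf{s}(h,k_j)/\sqrt{2}}^{(p)}$ is unambiguous and equals $T_{q,1}^{(p)}$.

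With these identifications I would pass from the generalized to the ordinary objects using the two reduction remarks. By Remark \ref{remark:ordinary-fft}, the weight $\equiv 1$ turns the $L_p$ analytic GFFT into the ordinary $L_p$ analytic FFT, so $T_{q,\mathbf{s}(h,k_j)/\sqrt{2}}^{(p)} = T_{q,1}^{(p)} = T_q^{(p)}$ and likewise $T_{q,h}^{(p)} = T_{q,1}^{(p)} = T_q^{(p)}$. By Remark \ref{remark:ordinary-cp}(ii), the GCP with $k_1 = k_2 \equiv 1$ collapses to the ordinary CP, so $(\,\cdot\, * \,\cdot\,)_{-q}^{(1,1)} = (\,\cdot\, * \,\cdot\,)_{-q}$. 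Substituting all of these into \eqref{eq:cp-fft-basic} produces precisely the claimed identity for s-a.e. $y \in C_0[0,T]$.

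Because the argument is a pure specialization of Theorem \ref{thm:cp-tpq02}, I do not expect a substantive obstacle. The one step deserving care is the equivalence-class convention for $\mathbf{s}$: one must verify that replacing the class $\mathbf{s}(1,1)$ by the representative $\sqrt{2}$ leaves the transforms unchanged. This is exactly the fact that the GFFT sees its weight only through its square (the same phenomenon behind the observation, recorded just after the definition of the GFFT, that $T_{q,h}^{(p)}(F)\approx T_{q,-h}^{(p)}(F)$). Once this is noted, the corollary follows at once.
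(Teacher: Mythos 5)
Your proposal is correct and follows exactly the paper's own proof, which is the one-line specialization $h=k_1=k_2\equiv 1$ in equation \eqref{eq:cp-fft-basic}; your additional verification that $\mathbf{s}(1,1)/\sqrt{2}$ reduces to the weight $1$ (via the fact that the GFFT depends on its weight only through its square) is a correct and worthwhile elaboration of a detail the paper leaves implicit.
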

\begin{proof}
In equation \eqref{eq:cp-fft-basic}, simply choose  $h=k_1=k_2\equiv 1$.
\qed\end{proof}

\renewcommand{\thecorollary}{\thesection.9}

\begin{corollary}[Theorem  3.2  in \cite{CCKSY05}] 
Let  $F$, $G$, and $h$  be as in Theorem  \ref{thm:gfft-gcp-compose}. 
Then, for all $p\in[1,2]$ and all real $q\in\mathbb R\setminus\{0\}$,
\[
\Big(T_{q,h}^{(p)}(F)*T_{q,h}^{(p)}(G)\Big)_{-q} (y)
=T_{q,h}^{(p)}\bigg(F \bigg(\frac{\cdot}{\sqrt2}\bigg)
 G \bigg(\frac{\cdot}{\sqrt2}\bigg) \bigg)(y)
\]
for s-a.e. $y\in C_{0}[0,T]$, where $(F*G)_q\equiv (F*G)_q^{(h,h)}$ denotes the GCP of $F$ 
and $G$ studied in    \cite{CCKSY05,HPS97-2}  (see   Remark \ref{remark:ordinary-cp}).
\end{corollary}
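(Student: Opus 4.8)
The plan is to derive equation \eqref{eq:cp-fft-basic} purely formally from the two tools already established, namely the inversion identity \eqref{eq:inverse} of Corollary \ref{thm:afft-inverse} and the first fundamental relationship \eqref{eq:gfft-gcp} of Theorem \ref{thm:gfft-gcp-compose}, without ever unpacking the definition of the GCP or computing any generalized analytic Feynman integral directly. The key observation is that \eqref{eq:gfft-gcp} expresses the GFFT of a GCP as a product of GFFTs, and the goal \eqref{eq:cp-fft-basic} is morally its ``dual'': it should follow by applying the transform and its inverse in the right order.

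First I would write the left-hand side of \eqref{eq:cp-fft-basic} as $T_{q,h}^{(p)}\bigl(T_{-q,h}^{(p)}(\,\cdot\,)\bigr)$ applied to the GCP $\bigl(T_{q,\mathbf{s}(h,k_1)/\sqrt2}^{(p)}(F)* T_{q,\mathbf{s}(h,k_2)/\sqrt2}^{(p)}(G)\bigr)_{-q}^{(k_1,k_2)}$; this is legitimate because \eqref{eq:inverse} guarantees $T_{q,h}^{(p)}\circ T_{-q,h}^{(p)}\approx \mathrm{id}$ on $\mathcal S(L_2[0,T])$, and Theorem \ref{thm:gcp} ensures the GCP in question belongs to $\mathcal S(L_2[0,T])$ so that the composition makes sense. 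Second, I would apply \eqref{eq:gfft-gcp} to the inner $T_{-q,h}^{(p)}$ of that GCP, but with the roles of the data shifted: replace $F$ by $T_{q,\mathbf{s}(h,k_1)/\sqrt2}^{(p)}(F)$, replace $G$ by $T_{q,\mathbf{s}(h,k_2)/\sqrt2}^{(p)}(G)$, and replace the parameter $q$ by $-q$. This turns the GFFT of the GCP into the product of two transforms, each evaluated at $\cdot/\sqrt2$, of the form $T_{-q,\mathbf{s}(h,k_1)/\sqrt2}^{(p)}\bigl(T_{q,\mathbf{s}(h,k_1)/\sqrt2}^{(p)}(F)\bigr)$ and the analogous expression for $G$. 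Third, I would invoke \eqref{eq:inverse} a second time on each factor—now with the weight $\mathbf{s}(h,k_j)/\sqrt2$ in place of $h$—to collapse $T_{-q,\mathbf{s}(h,k_j)/\sqrt2}^{(p)}\circ T_{q,\mathbf{s}(h,k_j)/\sqrt2}^{(p)}$ to the identity, leaving exactly $F(\cdot/\sqrt2)\,G(\cdot/\sqrt2)$ inside the outer $T_{q,h}^{(p)}$, which is the right-hand side.

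The main point requiring care, and the step I expect to be the chief obstacle, is verifying that the hypotheses of Theorem \ref{thm:gfft-gcp-compose} are actually met at the moment \eqref{eq:gfft-gcp} is applied with the substituted data. That theorem demands the relation $h^2=k_1k_2$ $m_L$-a.e. tying the outer transform weight to the convolution weights, and it demands that the transforms appearing on the right are taken with weights $\mathbf{s}(h,k_1)/\sqrt2$ and $\mathbf{s}(h,k_2)/\sqrt2$. When I substitute $F\mapsto T_{q,\mathbf{s}(h,k_1)/\sqrt2}^{(p)}(F)$ and $G\mapsto T_{q,\mathbf{s}(h,k_2)/\sqrt2}^{(p)}(G)$ with convolution weights still $k_1,k_2$ and outer weight still $h$, the same $h^2=k_1k_2$ condition carries over unchanged, and the right-hand weights produced by \eqref{eq:gfft-gcp} are again $\mathbf{s}(h,k_1)/\sqrt2$ and $\mathbf{s}(h,k_2)/\sqrt2$—precisely the weights that the second inversion in Corollary \ref{thm:afft-inverse} needs to cancel. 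I would also check that $\mathbf{s}(h,k_j)/\sqrt2$ lies in $L_\infty[0,T]$ so that both Corollary \ref{thm:afft-inverse} and Theorem \ref{thm:gfft-gcp-compose} apply; this is guaranteed by the remark in the preliminaries that $\mathbf{s}$ of $L_\infty$ functions can be taken in $L_\infty$, together with $h,k_1,k_2\in L_\infty[0,T]$. Once this bookkeeping of weights and parameters is confirmed consistent across the three applications, the chain of $\approx$-equalities closes and the result follows s-a.e.
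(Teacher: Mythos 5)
Your argument is a correct derivation of the general identity \eqref{eq:cp-fft-basic}, and it follows exactly the route the paper itself takes to prove Theorem \ref{thm:cp-tpq02}: sandwich the GCP between $T_{q,h}^{(p)}$ and $T_{-q,h}^{(p)}$ using Corollary \ref{thm:afft-inverse}, apply \eqref{eq:gfft-gcp} with $F$, $G$, $q$ replaced by $T_{q,\mathbf{s}(h,k_1)/\sqrt2}^{(p)}(F)$, $T_{q,\mathbf{s}(h,k_2)/\sqrt2}^{(p)}(G)$, $-q$, and then cancel each factor with a second application of \eqref{eq:inverse}. Your bookkeeping of the hypotheses (the condition $h^2=k_1k_2$ is unaffected by the substitution, the GCP stays in $\mathcal S(L_2[0,T])$ by Theorem \ref{thm:gcp}, and the kernels $\mathbf{s}(h,k_j)/\sqrt2$ can be taken in $L_\infty[0,T]$) is also the right thing to check.

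However, the statement you were asked to prove is not \eqref{eq:cp-fft-basic} itself but its special case $k_1=k_2=h$, and the specialization --- which is the entire content of the paper's one-line proof of this corollary --- is absent from your write-up. To close the argument you should add: (i) with $k_1=k_2=h$ the hypothesis $h^2=k_1k_2$ of Theorem \ref{thm:gfft-gcp-compose} holds trivially; (ii) $\mathbf{s}(h,h)^2=2h^2$ $m_L$-a.e., so every representative $\mathbf{s}$ of the class $\mathbf{s}(h,h)/\sqrt2$ satisfies $\mathbf{s}^2=h^2$ a.e., whence $\|u\mathbf{s}\|_2=\|uh\|_2$ for all $u$ and therefore $T_{q,\mathbf{s}}^{(p)}\approx T_{q,h}^{(p)}$ by the explicit formula of Theorem \ref{thm:gfft} (this is the same invariance that makes $T_{q,h}^{(p)}\approx T_{q,-h}^{(p)}$); and (iii) $(\cdot*\cdot)_{-q}^{(h,h)}$ is by Remark \ref{remark:ordinary-cp}(i) precisely the GCP written $(\cdot*\cdot)_{-q}$ in the corollary. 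Without (ii) in particular, what your chain of equalities delivers is $\big(T_{q,\mathbf{s}(h,h)/\sqrt2}^{(p)}(F)*T_{q,\mathbf{s}(h,h)/\sqrt2}^{(p)}(G)\big)_{-q}^{(h,h)}$ rather than the stated $\big(T_{q,h}^{(p)}(F)*T_{q,h}^{(p)}(G)\big)_{-q}$, so the reduction of the kernel $\mathbf{s}(h,h)/\sqrt2$ to $h$ is a genuinely required (if short) step, not mere notation.
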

\begin{proof}
In equation \eqref{eq:cp-fft-basic}, simply choose  $h=k_1=k_2$.
\qed\end{proof}

\setcounter{equation}{0}
\section{Examples}

The assertion  in Theorem \ref{thm:cp-tpq02} above can be applied to many 
Gaussian processes $\mathcal Z_h$  with $h\in L_\infty[0,T]$. 
In view of the assumption  in Theorems \ref{thm:gfft-gcp-compose} and \ref{thm:cp-tpq02}, 
we have to check that 
there exist solutions $\{h,k_1,k_2, \mathbf{s}_1,\mathbf{s}_2\}$  of the system
\[
\begin{cases}
\mbox{(i)}   &h^2 =k_1k_2,\\
\mbox{(ii)}  &\mathbf{s}_1=\mathbf{s}(h,k_1) \,\, m_L\mbox{-a.e} \, \mbox{  on }\, [0,T],\\
\mbox{(iii)} &\mathbf{s}_2=\mathbf{s}(h,k_2) \,\, m_L\mbox{-a.e} \, \mbox{  on }\, [0,T], 
\end{cases}
\]
or, equivalently,
\begin{equation}\label{system}
\begin{cases}
\mbox{(i)}   &h^2 =k_1k_2,\\
\mbox{(ii)}  &\mathbf{s}_1^2=h^2 +k_1^2 \,\, m_L\mbox{-a.e} \, \mbox{  on }\, [0,T],\\
\mbox{(iii)} &\mathbf{s}_2^2=h^2 +k_2^2 \,\, m_L\mbox{-a.e} \, \mbox{  on }\, [0,T].
\end{cases}
\end{equation}
Throughout this section, we will present some examples for 
the solution sets of the system \eqref{system}.
To do this we consider the Wiener space $C_0[0,1]$
and the Hilbert space $L_2[0,1]$ for simplicity.

\renewcommand{\theexample}{\thesection.1}

\begin{example} (Polynomials)
The set $\mathcal P =\{h, k_1, k_2, \mathbf{s}_1,\mathbf{s}_2\}$ 
of functions in $L_\infty[0,1]$ with 
\[
\begin{cases}
& h(t) = 2t(t^2-1)  \\
&k_1(t) =(t^2-1)^2,  \\
&k_2(t) =4t^2,  \\ 
&\mathbf{s}_1(t) = (t^2-1)(t^2+1),  \\ 
&\mathbf{s}_2(t) = 2 t(t^2+1)
\end{cases}
\] is a solution  set  of the system \eqref{system}.
Thus
\[
\mathbf{s} (h,k_1)(t) \equiv \mathbf{s}_1(t) 
=(t^2-1)(t^2+1),
\]
and
\[
\mathbf{s}(h,k_2)(t)\equiv \mathbf{s}_2(t) 
=2 t(t^2+1)
\] 
for all $t\in [0,1]$. In this case,  equation \eqref{eq:cp-fft-basic} 
with the functions in $\mathcal P$ holds for any functionals in $F$ and $G$ 
in $\mathcal S(L_2[0,1])$.
\end{example}

\renewcommand{\theexample}{\thesection.2}

\begin{example}  (Trigonometric  functions I) 
The set $\mathcal T_1=\{h, k_1, k_2, \mathbf{s}_1,\mathbf{s}_2\}$ 
of functions in $L_\infty[0,1]$ with 
\[
\begin{cases}
 h(t)=\sin 2t=2\sin t\cos t,  \\
k_1(t)=2\sin^2t,  \\
k_2(t)=2\cos^2t,  \\ 
\mathbf{s}_1(t)=2\sin t,  \\ 
\mathbf{s}_2(t)=2\cos t
\end{cases}
\] is a solution set  of the system \eqref{system}.
Thus
\[
\mathbf{s} (h,k_1)(t) \equiv \mathbf{s}_1(t) 
=\mathbf{s}(2\sin\cos,2\sin^2)(t)
=2 \sin t, 
\]
and
\[
\mathbf{s}(h,k_2)(t)\equiv \mathbf{s}_2(t) 
=\mathbf{s}(2\sin\cos,2\cos^2)(t)
=2 \cos t 
\] 
for all $t\in [0,1]$. Also, using equation \eqref{eq:cp-fft-basic},
it follows that for all $p\in[1,2]$, all nonzero real $q$, and all functionals $F$ and $G$ in 
 $\mathcal S(L_2[0,1])$,
\[
\Big(T_{q,\sqrt{2}\sin }^{(p)} (F)*
 T_{q,\sqrt{2}\cos}^{(p)}(G) \Big)_{-q}^{(2\sin^2,2\cos^2)}(y) 
=T_{q,2\sin\cos}^{(p)} \bigg(F \bigg(\frac{\cdot}{\sqrt2} \bigg)
 G \bigg(\frac{\cdot}{\sqrt2}\bigg)\bigg)(y)
\]
for s-a.e. $y\in C_0[0,1]$.
\end{example}

\renewcommand{\theexample}{\thesection.3}
\begin{example}  (Trigonometric  functions II)
 The set $\mathcal T_2=\{h, k_1, k_2, \mathbf{s}_1,\mathbf{s}_2\}$ 
of functions in $L_\infty[0,1]$ with 
\[
\begin{cases}
 h(t)=\sqrt2\sin t,  \\
k_1(t)=\sqrt2\sin t\tan t,  \\
k_2(t)=\sqrt2\cos t,  \\ 
\mathbf{s}_1(t)=\sqrt2\tan t,  \\ 
\mathbf{s}_2(t)=\sqrt2
\end{cases}
\] 
is a solution  set  of the system \eqref{system}.
Thus
\[
\mathbf{s} (h,k_1)(t) \equiv \mathbf{s}_1(t) 
=\mathbf{s}(\sqrt2\sin,\sqrt2\sin\tan)(t)
=  \sqrt2\tan t, 
\]
and
\[
\mathbf{s}(h,k_2)(t)\equiv \mathbf{s}_2(t) 
=\mathbf{s}(\sqrt2\sin,\sqrt2\cos)(t)
=\sqrt2 \,\,\,\,\,(\mbox{constant function})
\] 
for all $t\in [0,1]$. 
\end{example}

\renewcommand{\theexample}{\thesection.4}

\begin{example} (Hyperbolic functions)
The hyperbolic functions are defined in terms of the exponential functions
$e^{x}$  and $e^{-x}$. The set $\mathcal H=\{h, k_1, k_2, \mathbf{s}_1,\mathbf{s}_2\}$ 
of functions in $L_\infty[0,1]$ with 
\[
\begin{cases}
 h(t)=1 ,  \\
k_1(t)= \sinh\big(t+\tfrac12\big),  \\
k_2(t)= \mathrm{csch} \big(t+\tfrac12\big),  \\ 
\mathbf{s}_1(t)= \cosh\big(t+\tfrac12\big),  \\ 
\mathbf{s}_2(t)= \coth\big(t+\tfrac12\big)
\end{cases}
\] is a solution  set of the system \eqref{system}.
Thus
\[
\mathbf{s} (h,k_1)(t) \equiv \mathbf{s}_1(t) 
=\mathbf{s}\big(1,\sinh\big(\cdot+\tfrac12\big)\big)(t)
=  \cosh\big(t+\tfrac12\big), 
\]
and
\[
\mathbf{s}(h,k_2)(t)\equiv \mathbf{s}_2(t) 
=\mathbf{s}\big(1,\mathrm{csch} \big(\cdot+\tfrac12\big)\big)(t)
= \coth\big(t+\tfrac12\big)
\] 
for all $t\in [0,1]$. 
\end{example}

\setcounter{equation}{0} 
\section{Iterated GFFTs  and GCPs}\label{relation2}

\par
In this section, we present  general relationships 
between the iterated GFFT and the GCP for functionals 
in $\mathcal S(L_2[0,T])$ which  are developments of    \eqref{eq:cp-fft-basic}.
To do this we quote a  result  from \cite{Indag}.

\renewcommand{\thetheorem}{\thesection.1}

\begin{theorem}\label{thm:2018-step1}
Let $F\in \mathcal S(L_2[0,T])$  be given by equation \eqref{eq:element}, and 
let $\mathcal H=\{h_1,\ldots,h_n\}$ be  a finite sequence  of nonzero  functions in $L_\infty[0,T]$.
Then, for all $p\in[1,2]$ and all nonzero real $q$, the iterated $L_p$ analytic  GFFT, 
\[
T_{q,h_n}^{(p)}\big(T_{q,h_{n-1}}^{(p)}\big(
   \cdots\big(T_{q,h_2}^{(p)}\big(T_{q,h_1}^{(p)}(F)\big)\big)\cdots\big)\big)
\]
of $F$ exists, belongs to $\mathcal S(L_2[0,T])$, and is given by the formula
\[
T_{q,h_n}^{(p)}\big(T_{q,h_{n-1}}^{(p)}\big(
   \cdots\big(T_{q,h_2}^{(p)}\big(T_{q,h_1}^{(p)}(F)\big)\big)\cdots\big)\big) (y)
= \int_{L_2[0,T]}\exp\{i\langle{u,y}\rangle\}df_t^{h_1,\ldots,h_n}(u)
\]
for s-a.e. $y\in C_{0}[0,T]$, where  $f_t^{h_1,\ldots,h_n}$ is the complex 
measure  in  $\mathcal M(L_2[0,T])$ given by
\[
f_t^{h_1,\ldots,h_n}(B)
=\int_B \exp\bigg\{-\frac{i}{2q}\sum_{j=1}^n\|uh_j\|_2^2\bigg\}df(u)
\]
for $B \in \mathcal B(L_2[0,T])$. Moreover it follows that
\begin{equation}\label{eq:gfft-n-fubini-add}
T_{q,h_n}^{(p)}\big(T_{q,h_{n-1}}^{(p)}\big(
   \cdots\big(T_{q,h_2}^{(p)}\big(T_{q,h_1}^{(p)}(F)\big)\big)\cdots\big)\big)(y)  
=T_{q, \mathbf{s}(\mathcal H)}^{(p)}(F)(y)
\end{equation}
for s-a.e. $y\in C_{0}[0,T]$, where 
$\mathbf{s}(\mathcal H)\equiv \mathbf{s}(h_1,\ldots,h_n)$ is a function in $L_{\infty}[0,T]$
satisfying the relation
\begin{equation}\label{eq:fn-rot-ind}
\mathbf{s}(\mathcal H)^2(t)=h_1^2(t)+\cdots+h_n^2(t)
\end{equation} 
 for $m_L$-a.e. $t\in [0,T]$.
\end{theorem}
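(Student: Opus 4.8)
The plan is to argue by induction on the length $n$ of the sequence $\mathcal H$, using Theorem \ref{thm:gfft} as the single-step engine. The base case $n=1$ is precisely Theorem \ref{thm:gfft}, which asserts that $T_{q,h_1}^{(p)}(F)$ exists, lies in $\mathcal S(L_2[0,T])$, and is represented by the measure $f_t^{h_1}(B)=\int_B\exp\{-\tfrac{i}{2q}\|uh_1\|_2^2\}df(u)$. The structural fact that makes the induction run is that Theorem \ref{thm:gfft} maps a functional in $\mathcal S(L_2[0,T])$ to another functional in $\mathcal S(L_2[0,T])$, so at each stage the object to which the next transform $T_{q,h_k}^{(p)}$ is applied is again of the form \eqref{eq:element}, and Theorem \ref{thm:gfft} applies verbatim. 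In particular the existence assertion is packaged into the membership claim and requires no separate l.i.m. argument.

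For the inductive step I would assume that the $(n-1)$-fold iterate equals a functional in $\mathcal S(L_2[0,T])$ whose associated measure is $f_t^{h_1,\ldots,h_{n-1}}$, and then apply Theorem \ref{thm:gfft} with $h=h_n$ to that functional, letting $f_t^{h_1,\ldots,h_{n-1}}$ play the role of the measure $f$. The theorem then produces a new measure obtained by inserting the factor $\exp\{-\tfrac{i}{2q}\|uh_n\|_2^2\}$ into the integrand; since this multiplies the factor $\exp\{-\tfrac{i}{2q}\sum_{j=1}^{n-1}\|uh_j\|_2^2\}$ already present, the two combine into $\exp\{-\tfrac{i}{2q}\sum_{j=1}^{n}\|uh_j\|_2^2\}$, which is exactly $f_t^{h_1,\ldots,h_n}$. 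The one point that deserves explicit verification is that each intermediate measure is a genuine element of $\mathcal M(L_2[0,T])$: because the inserted exponential has modulus one, the total variation of $f_t^{h_1,\ldots,h_k}$ is dominated by that of $f$, so finiteness is preserved at every stage. The hypothesis $h_k\in L_\infty[0,T]$ guarantees $uh_k\in L_2[0,T]$ for every $u\in L_2[0,T]$, so each norm $\|uh_k\|_2$ is finite and the integrand is well defined.

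To obtain the closed form \eqref{eq:gfft-n-fubini-add}, I would compare the iterated measure $f_t^{h_1,\ldots,h_n}$ against the single-step measure produced by $T_{q,\mathbf{s}(\mathcal H)}^{(p)}$. Applying Theorem \ref{thm:gfft} with $h=\mathbf{s}(\mathcal H)$, which is legitimate since, as noted in the preliminaries, $\mathbf{s}(\mathcal H)$ may be taken in $L_\infty[0,T]$ when the $h_j$ are, gives a measure with integrand $\exp\{-\tfrac{i}{2q}\|u\,\mathbf{s}(\mathcal H)\|_2^2\}$. The defining relation \eqref{eq:fn-rot-ind}, namely $\mathbf{s}(\mathcal H)^2=\sum_{j=1}^n h_j^2$ holding $m_L$-a.e., yields the pointwise identity $u^2(t)\,\mathbf{s}(\mathcal H)^2(t)=\sum_{j=1}^n u^2(t)h_j^2(t)$, and integrating over $[0,T]$ gives $\|u\,\mathbf{s}(\mathcal H)\|_2^2=\sum_{j=1}^n\|uh_j\|_2^2$ for every $u\in L_2[0,T]$. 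Hence the two integrands coincide, the measures are equal, and the two Fourier--Stieltjes representations agree s-a.e. There is no genuine obstacle here; the only care required is the bookkeeping that each intermediate transform remains inside $\mathcal S(L_2[0,T])$ so that Theorem \ref{thm:gfft} can be reapplied, which is exactly the closure property that theorem already supplies.
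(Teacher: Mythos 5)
Your argument is correct. Note that the paper itself gives no proof of Theorem \ref{thm:2018-step1}: it is quoted as a known result from \cite{Indag}, so there is no in-paper argument to compare against step by step. Your induction --- repeatedly applying Theorem \ref{thm:gfft}, using its closure property (the GFFT maps $\mathcal S(L_2[0,T])$ into itself) to feed each intermediate measure $f_t^{h_1,\ldots,h_k}$ back in as the associated measure for the next step, and then identifying $\sum_{j=1}^n\|uh_j\|_2^2$ with $\|u\,\mathbf{s}(\mathcal H)\|_2^2$ via \eqref{eq:fn-rot-ind} --- is exactly the natural route, and you have attended to the points that actually need checking (finite total variation of the intermediate measures, $uh_k\in L_2[0,T]$ from $h_k\in L_\infty[0,T]$, and the fact that the transform respects s-a.e.\ equality so it may be applied to an iterate that is only defined s-a.e.).
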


 We next establish two types of extensions   of Theorem \ref{thm:cp-tpq02} above. 

\renewcommand{\thetheorem}{\thesection.2}

\begin{theorem} \label{thm:iter-gfft-gcp-compose}
Let  $k_1$, $k_2$, $F$, and $G$ be as in 
Theorem \ref{thm:gcp}, and let $\mathcal H=\{h_1,\ldots,h_n\}$ 
be  a finite sequence of nonzero  functions in $L_{\infty}[0,T]$. 
Assume that  
\[
\mathbf{s}(\mathcal H)^2  \equiv \mathbf{s} (h_1,\ldots,h_n)^2=k_1k_2
\] 
for $m_L$-a.e. on $[0,T]$, where $\mathbf{s}(\mathcal H)$ is the function in $L_{\infty}[0,T]$
satisfying \eqref{eq:fn-rot-ind} above. 
Then,  for all $p\in[1,2]$ and all nonzero real $q$, 
\begin{equation} \label{eq:multi-rel-01}
\begin{aligned}
&\Big(T_{q,k_1/\sqrt2}^{(p)}
\big(T_{q,h_n/\sqrt2}^{(p)}\big(\cdots\big(T_{q,h_2/\sqrt2}^{(p)}
\big(T_{q,h_1/\sqrt2}^{(p)}(F)\big)\big)\cdots\big)\big)\\
&\qquad 
*T_{q,k_2/\sqrt2}^{(p)}\big( T_{q,h_n/\sqrt2}^{(p)}\big( \cdots\big(T_{q,h_2/\sqrt2}^{(p)}
\big(T_{q,h_1/\sqrt2}^{(p)}(G)\big)\big)\cdots\big)\big)\Big)_{-q}^{(k_1,k_2)}(y)\\
&=\Big(T_{q, \mathbf{s}(\mathcal H,k_1)/\sqrt2}^{(p)}(F)
*T_{q, \mathbf{s}(\mathcal H,k_2)/\sqrt2}^{(p)}(G)\Big)_{-q}^{(k_1,k_2)}(y)\\
&= T_{q,\mathbf{s}(\mathcal H)}^{(p)} \bigg(F \bigg(\frac{\cdot}{\sqrt2} \bigg)
 G \bigg(\frac{\cdot}{\sqrt2}\bigg)\bigg)(y)
\end{aligned}
\end{equation}
for s-a.e. $y\in C_0[0,T])$, where   $\mathbf{s}(\mathcal H,k_1)$ 
and $\mathbf{s}(\mathcal H,k_2)$
are functions in $L_{\infty}[0,T]$ satisfying the relations
\[
\mathbf{s}(\mathcal H,k_1)^2
\equiv \mathbf{s}(h_1,\ldots,h_n,k_1)^2
=h_1^2 +\cdots+h_n^2 +k_1^2
\]
and
\[
\mathbf{s}(\mathcal H,k_2)^2\equiv \mathbf{s}(h_1,\ldots,h_n,k_2)^2
=h_1^2 +\cdots+h_n^2 +k_2^2 
\]
for $m_L$-a.e. on $[0,T]$, respectively.
\end{theorem}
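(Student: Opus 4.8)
The plan is to prove the chain of equalities in \eqref{eq:multi-rel-01} in two steps, each of which reduces to a result already in hand together with the composition identity $\mathbf{s}(\mathbf{s}(h_1,\ldots,h_{k-1}),h_k)=\mathbf{s}(h_1,\ldots,h_k)$ recorded in the preliminaries. First I would show that the first and second lines of \eqref{eq:multi-rel-01} coincide by collapsing each iterated GFFT into a single one. Applying Theorem \ref{thm:2018-step1}, equation \eqref{eq:gfft-n-fubini-add}, to the sequence $\mathcal H'=\{h_1/\sqrt2,\ldots,h_n/\sqrt2,k_1/\sqrt2\}$ (whose members are all nonzero elements of $L_\infty[0,T]$) shows that the inner chain acting on $F$ equals $T_{q,\mathbf{s}(\mathcal H')}^{(p)}(F)$. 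Since
\[
\mathbf{s}(\mathcal H')^2=\tfrac12\big(h_1^2+\cdots+h_n^2+k_1^2\big)=\big(\mathbf{s}(\mathcal H,k_1)/\sqrt2\big)^2\quad m_L\text{-a.e.},
\]
the defining relation \eqref{eq:fn-rot-ind} gives $\mathbf{s}(\mathcal H')\equiv\mathbf{s}(\mathcal H,k_1)/\sqrt2$, so this factor becomes $T_{q,\mathbf{s}(\mathcal H,k_1)/\sqrt2}^{(p)}(F)$. Repeating the computation with $k_2$ in place of $k_1$ turns the $G$-chain into $T_{q,\mathbf{s}(\mathcal H,k_2)/\sqrt2}^{(p)}(G)$. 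Substituting both identities inside the GCP $(\,\cdot*\cdot\,)_{-q}^{(k_1,k_2)}$ yields the equality of the first and second lines.

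Next I would derive the second equality by invoking the main theorem, Theorem \ref{thm:cp-tpq02}, applied with its function $h$ replaced by $\mathbf{s}(\mathcal H)$, which lies in $L_\infty[0,T]$ and is nonzero since $\mathbf{s}(\mathcal H)^2=\sum_j h_j^2$. The hypothesis of Theorem \ref{thm:cp-tpq02} demands $h^2=k_1k_2$, which under this substitution reads $\mathbf{s}(\mathcal H)^2=k_1k_2$ --- exactly the standing assumption of the present theorem. Theorem \ref{thm:cp-tpq02} therefore gives
\[
\Big(T_{q,\mathbf{s}(\mathbf{s}(\mathcal H),k_1)/\sqrt2}^{(p)}(F)*T_{q,\mathbf{s}(\mathbf{s}(\mathcal H),k_2)/\sqrt2}^{(p)}(G)\Big)_{-q}^{(k_1,k_2)}(y)=T_{q,\mathbf{s}(\mathcal H)}^{(p)}\Big(F\big(\tfrac{\cdot}{\sqrt2}\big)G\big(\tfrac{\cdot}{\sqrt2}\big)\Big)(y).
\]
By the composition identity, $\mathbf{s}(\mathbf{s}(\mathcal H),k_j)=\mathbf{s}(\mathcal H,k_j)$ for $j\in\{1,2\}$, so the left-hand side above is precisely the second line of \eqref{eq:multi-rel-01} and the right-hand side is its third line. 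Combining the two steps proves \eqref{eq:multi-rel-01}.

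The argument involves no genuine analytic difficulty, so I do not expect a serious obstacle; the only point demanding care is the bookkeeping of the $1/\sqrt2$ scalings together with the fact that $\mathbf{s}$ is specified only up to the equivalence $\mathbf{s}_1\sim\mathbf{s}_2\Leftrightarrow\mathbf{s}_1^2=\mathbf{s}_2^2$. Thus every identity among the $\mathbf{s}$-functions is an identity of squares ($m_L$-a.e.), which is all that Theorems \ref{thm:2018-step1} and \ref{thm:cp-tpq02} actually use; the residual sign ambiguity is harmless because the GFFT depends on its defining function only through that function's square, as noted following the definition of the GFFT ($T_{q,h}^{(p)}(F)\approx T_{q,-h}^{(p)}(F)$).
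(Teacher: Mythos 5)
Your proposal is correct and follows essentially the same route as the paper: the first equality is obtained by collapsing the iterated transforms via equation \eqref{eq:gfft-n-fubini-add} of Theorem \ref{thm:2018-step1}, and the second by applying Theorem \ref{thm:cp-tpq02} with $h$ replaced by $\mathbf{s}(\mathcal H)$. Your additional bookkeeping of the $1/\sqrt2$ scalings and of the sign/equivalence-class ambiguity in $\mathbf{s}$ is sound and merely makes explicit what the paper leaves implicit.
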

 \begin{proof}
Applying \eqref{eq:gfft-n-fubini-add}, the first equality of \eqref{eq:multi-rel-01}
follows immediately. Next using \eqref{eq:cp-fft-basic} with $h$ replaced with $\mathbf{s}(\mathcal H)$,
the second equality of \eqref{eq:multi-rel-01} also follows.
\qed\end{proof}

   \par
 In view of equations  \eqref{eq:gfft-n-fubini-add} and \eqref{eq:cp-fft-basic},
 we also obtain the following assertion.

\renewcommand{\thetheorem}{\thesection.3}

\begin{theorem} \label{thm:iter-gfft-gcp-compose-2nd}
Let $F$ and $G$ be as in Theorem  \ref{thm:gcp}. 
Given a nonzero function  $h$ in $L_{\infty}[0,T]$
and finite sequences $\mathcal K_1=\{k_{11},k_{12},\ldots,k_{1n}\}$
and  $\mathcal K_2=\{k_{21},k_{22},\ldots,k_{2m}\}$ of   nonzero  functions in $L_{\infty}[0,T]$, 
assume that  
\[
h^2=\mathbf{s}(\mathcal K_1)\mathbf{s}(\mathcal K_2)
\] 
for $m_L$-a.e. on $[0,T]$. 
Then,  for all $p\in[1,2]$ and all nonzero real $q$, 
\begin{equation} \label{eq:multi-rel-02-2nd}
\begin{aligned}
&\Big(T_{q,h/\sqrt2}^{(p)}
\big(T_{q,k_{1n}/\sqrt2}^{(p)}  
\big(\cdots
\big(T_{q,k_{12}/\sqrt2}^{(p)}\big(T_{q,k_{11}/\sqrt2}^{(p)}(F)\big)\big)\cdots\big)\big)\big)\\
&\quad
*T_{q,h/\sqrt2}^{(p)}\big( T_{q,k_{2m}/\sqrt2}^{(p)}  
\big( \cdots
\big(T_{q,k_{22}/\sqrt2}^{(p)}\big(T_{q,k_{21}/\sqrt2}^{(p)}(G)\big)\big)\cdots\big)\big)\big)\Big)_{-q}^{(\mathbf{s}(\mathcal K_1),\mathbf{s}(\mathcal K_2))}(y)\\
&=\Big(T_{q,h/\sqrt2}^{(p)}\big(T_{q,\mathbf{s}(\mathcal K_1)/\sqrt2}^{(p)}(F)\big) 
*T_{q,h/\sqrt2}^{(p)}\big( T_{q,\mathbf{s}(\mathcal K_2)/\sqrt2}^{(p)}(G)\big) \Big)_{-q}^{(\mathbf{s}(\mathcal K_1),\mathbf{s}(\mathcal K_2))}(y)\\
&=\Big(T_{q, \mathbf{s}(h,\mathbf{s}(\mathcal K_1))/\sqrt2}^{(p)}(F)
*T_{q,\mathbf{s}(h,\mathbf{s}(\mathcal K_2))/\sqrt2}^{(p)}(G)\Big)_{-q}^{(\mathbf{s}(\mathcal K_1),\mathbf{s}(\mathcal K_2))}(y)\\
&= T_{q,h}^{(p)} \bigg(F \bigg(\frac{\cdot}{\sqrt2} \bigg)
 G \bigg(\frac{\cdot}{\sqrt2}\bigg)\bigg)(y)
\end{aligned}
\end{equation}
for s-a.e. $y\in C_0[0,T])$, where 
$\mathbf{s}(h,\mathbf{s}(\mathcal K_1))$, and $\mathbf{s}(h,\mathbf{s}(\mathcal K_2))$
are functions in $L_{\infty}[0,T]$ satisfying the relations
\[
\mathbf{s}(h,\mathbf{s}(\mathcal K_1))^2
=h^2 +\mathbf{s} (\mathcal K_1)^2=h^2+k_{11}^2 + \cdots+k_{1n}^2, 
\]
and
\[
\mathbf{s}(h,\mathbf{s}(\mathcal K_2))^2
=h^2 +\mathbf{s} (\mathcal K_2)^2=h^2+k_{21}^2   +\cdots+k_{2m}^2
\]
for $m_L$-a.e. on $[0,T]$, respectively.
\end{theorem}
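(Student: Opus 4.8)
The plan is to prove the chain \eqref{eq:multi-rel-02-2nd} by establishing its three equalities separately, each time reducing to one of the two tools already in hand: the iterated-to-single GFFT collapse \eqref{eq:gfft-n-fubini-add} of Theorem \ref{thm:2018-step1}, and the basic relation \eqref{eq:cp-fft-basic} of Theorem \ref{thm:cp-tpq02}. This mirrors the two-step proof of Theorem \ref{thm:iter-gfft-gcp-compose}, except that here \eqref{eq:gfft-n-fubini-add} is invoked twice before \eqref{eq:cp-fft-basic} closes the argument. The only arithmetic I will need is the scaling behaviour of the class $\mathbf{s}$: because the defining relation \eqref{eq:fn-rot-ind} is quadratic, $\mathbf{s}(ck_1,\ldots,ck_n)\equiv c\,\mathbf{s}(k_1,\ldots,k_n)$ for any constant $c$, since $(c\,\mathbf{s})^2=c^2\mathbf{s}^2=c^2(k_1^2+\cdots+k_n^2)=(ck_1)^2+\cdots+(ck_n)^2$. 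With $c=1/\sqrt2$ this gives $\mathbf{s}(k_{11}/\sqrt2,\ldots,k_{1n}/\sqrt2)\equiv\mathbf{s}(\mathcal K_1)/\sqrt2$, and similarly for $\mathcal K_2$.

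For the first equality I would apply \eqref{eq:gfft-n-fubini-add} to each of the two inner iterated transforms. The inner $F$-factor is the $n$-fold iterate with symbols $k_{11}/\sqrt2,\ldots,k_{1n}/\sqrt2$, which by \eqref{eq:gfft-n-fubini-add} collapses to $T_{q,\mathbf{s}(k_{11}/\sqrt2,\ldots,k_{1n}/\sqrt2)}^{(p)}(F)=T_{q,\mathbf{s}(\mathcal K_1)/\sqrt2}^{(p)}(F)$ after the scaling remark; the inner $G$-factor collapses to $T_{q,\mathbf{s}(\mathcal K_2)/\sqrt2}^{(p)}(G)$ in the same way. Substituting these back under the outer $T_{q,h/\sqrt2}^{(p)}$ transforms, while leaving the surrounding GCP with superscript $(\mathbf{s}(\mathcal K_1),\mathbf{s}(\mathcal K_2))$ untouched, reproduces the second line of \eqref{eq:multi-rel-02-2nd} verbatim.

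For the second equality I would apply \eqref{eq:gfft-n-fubini-add} once more, now to the two-fold iterate $T_{q,h/\sqrt2}^{(p)}\big(T_{q,\mathbf{s}(\mathcal K_1)/\sqrt2}^{(p)}(F)\big)$. Using \eqref{eq:gfft-n-fubini-add} with the pair $\{\mathbf{s}(\mathcal K_1)/\sqrt2,\,h/\sqrt2\}$, this equals $T_{q,\mathbf{s}(\mathbf{s}(\mathcal K_1)/\sqrt2,\,h/\sqrt2)}^{(p)}(F)$, whose symbol satisfies $\mathbf{s}(\mathbf{s}(\mathcal K_1)/\sqrt2,\,h/\sqrt2)^2=\tfrac12(\mathbf{s}(\mathcal K_1)^2+h^2)=\tfrac12\,\mathbf{s}(h,\mathbf{s}(\mathcal K_1))^2$, so it equals $T_{q,\mathbf{s}(h,\mathbf{s}(\mathcal K_1))/\sqrt2}^{(p)}(F)$; the $G$-factor is identical with $\mathcal K_2$ in place of $\mathcal K_1$. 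This yields the third line of \eqref{eq:multi-rel-02-2nd}.

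Finally, for the third equality I would invoke Theorem \ref{thm:cp-tpq02} directly, specializing its $k_1,k_2$ to $\mathbf{s}(\mathcal K_1),\mathbf{s}(\mathcal K_2)$. The standing hypothesis $h^2=\mathbf{s}(\mathcal K_1)\mathbf{s}(\mathcal K_2)$ is then exactly the hypothesis $h^2=k_1k_2$ of that theorem, and its conclusion \eqref{eq:cp-fft-basic} identifies the third line with $T_{q,h}^{(p)}\big(F(\cdot/\sqrt2)G(\cdot/\sqrt2)\big)$, the last line. I do not anticipate a genuine obstacle: the argument is pure bookkeeping, and the only points deserving care are that each symbol collapse is an identity of equivalence classes (equality of squares $m_L$-a.e., under which the GFFT is invariant since its measure in Theorem \ref{thm:gfft} depends on the symbol only through $\|u\,h\|_2^2$), and that every intermediate symbol lies in $L_\infty[0,T]$, so that the $L_\infty$ hypotheses of Theorems \ref{thm:2018-step1} and \ref{thm:cp-tpq02} genuinely apply.
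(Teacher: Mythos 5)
Your proposal is correct and follows essentially the same route as the paper, which states only that the theorem holds ``in view of equations \eqref{eq:gfft-n-fubini-add} and \eqref{eq:cp-fft-basic}'': you collapse the inner iterated transforms via \eqref{eq:gfft-n-fubini-add} (using the quadratic scaling $\mathbf{s}(ck_1,\ldots,ck_n)\equiv c\,\mathbf{s}(k_1,\ldots,k_n)$), collapse once more to absorb the outer $T^{(p)}_{q,h/\sqrt2}$, and close with Theorem \ref{thm:cp-tpq02} applied to $k_1=\mathbf{s}(\mathcal K_1)$, $k_2=\mathbf{s}(\mathcal K_2)$. Your write-up is in fact more detailed than the paper's, which omits the argument entirely.
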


\renewcommand{\theremark}{\thesection.4}

\begin{remark}
Note that given the functions $\{\mathbf{s}(\mathcal H),k_1,k_2, \mathbf{s}(\mathcal H,k_1),\mathbf{s}(\mathcal H,k_2) \}$
in Theorem \ref{thm:iter-gfft-gcp-compose}, the set $\mathcal F=\{h, k_1, k_2, \mathbf{s}_1,\mathbf{s}_2\}$ 
of functions in $L_\infty[0,T]$ with 
\[
\begin{cases}
 h(t)=\mathbf{s}(\mathcal H)(t),  \\
\mathbf{s}_1(t)=\mathbf{s}(\mathcal H,k_1)(t),  \\ 
\mathbf{s}_2(t)=\mathbf{s}(\mathcal H,k_2)(t)
\end{cases}
\] 
is a solution set of the system \eqref{system}.
Also,  given the functions 
\[
\{h,\mathbf{s}(\mathcal K_1),\mathbf{s}(\mathcal K_2),\mathbf{s}(h,\mathbf{s}(\mathcal K_1)),\mathbf{s}(h,\mathbf{s}(\mathcal K_2))\}
\]
in Theorem \ref{thm:iter-gfft-gcp-compose-2nd}, the set $\mathcal F=\{h, k_1, k_2, \mathbf{s}_1,\mathbf{s}_2\}$ 
of functions in $L_\infty[0,T]$ with 
\[
\begin{cases}
 k_1(t)=\mathbf{s}(\mathcal K_1)(t),  \\
 k_2(t)=\mathbf{s}(\mathcal K_2)(t),  \\
\mathbf{s}_1(t)=\mathbf{s}(h,\mathbf{s}(\mathcal K_1))(t),  \\ 
\mathbf{s}_2(t)=\mathbf{s}(h,\mathbf{s}(\mathcal K_2))(t)
\end{cases}
\] 
is a solution set of the system \eqref{system}.
\end{remark}
 
In the following two examples,  we also consider the Wiener space $C_0[0,1]$
and the Hilbert space $L_\infty[0,1]$ for simplicity.

\renewcommand{\theexample}{\thesection.5}

\begin{example}
Let 
$h_1=\sin \tfrac{\pi}{4}\big(t+\tfrac{1}{2} \big)$, 
$h_2=\cos \tfrac{\pi}{4}\big(t+\tfrac{1}{2} \big)$,
$h_3=\tan\tfrac{\pi}{4}\big(t+\tfrac{1}{2} \big)$,
$k_1(t)=\tan \tfrac{\pi}{4}\big(t+\tfrac{1}{2} \big)$, 
and
$k_2(t)= \sec \tfrac{\pi}{4}\big(t+\tfrac{1}{2} \big)\csc \tfrac{\pi}{4}\big(t+\tfrac{1}{2} \big)$ 
on $[0,1]$.
Then $\{h_1,h_2,h_3,k_1,k_2\}$ is a  set of functions in $L_\infty[0,1]$, and given the set $\mathcal H=\{h_1,h_2,h_3\}$, it 
follows that
\[
\begin{aligned}
\mathbf{s}(\mathcal H)(t)
&\equiv\mathbf{s}(h_1,h_2,h_3)^2(t)\\
&=\mathbf{s}\big(\sin\tfrac{\pi}{4}\big(\cdot+\tfrac{1}{2} \big),\cos\tfrac{\pi}{4}
\big(\cdot+\tfrac{1}{2} \big),\tan\tfrac{\pi}{4}\big(\cdot+\tfrac{1}{2} \big)\big)^2(t)\\
&=\sec^2 \tfrac{\pi}{4}\big(t+\tfrac{1}{2} \big)\\
&=k_1(t)k_2(t),
\end{aligned}
\] 
\[
\begin{aligned}
\mathbf{s}(\mathcal H,k_1)^2(t)
&\equiv \mathbf{s}(h_1,h_2,h_3,k_1)^2(t)\\
&=\sec^2\tfrac{\pi}{4}\big(t+\tfrac{1}{2} \big)+\tan^2\tfrac{\pi}{4}\big(t+\tfrac{1}{2} \big)
=\mathbf{s}(\mathbf{s}(\mathcal H),k_1)^2(t),
\end{aligned}
\] 
and
\[
\begin{aligned}
\mathbf{s}(\mathcal H,k_2)^2(t)
&\equiv  \mathbf{s}(h_1,h_2,h_3,k_2)^2(t)\\
&=\sec^2\tfrac{\pi}{4}\big(t+\tfrac{1}{2} \big) +\sec^2\tfrac{\pi}{4}\big(t+\tfrac{1}{2} \big)\csc^2\tfrac{\pi}{4}\big(t+\tfrac{1}{2} \big)\\
&=\mathbf{s}(\mathbf{s}(\mathcal H),k_2)^2(t),
\end{aligned}
\]
for all $t\in [0,1]$. 
 From this we see that
the set $\mathcal F_1=\{h, k_1, k_2, \mathbf{s}_1,\mathbf{s}_2\}$ 
of functions in $L_\infty[0,1]$ with 
\[
\begin{cases}
 h(t)= \mathbf{s}(h_1,h_2,h_3)(t)=\sec \tfrac{\pi}{4}\big(t+\tfrac{1}{2} \big),  \\
k_1(t)=\tan \tfrac{\pi}{4}\big(t+\tfrac{1}{2} \big) ,\\
k_2(t)= \sec \tfrac{\pi}{4}\big(t+\tfrac{1}{2} \big)\csc \tfrac{\pi}{4}\big(t+\tfrac{1}{2} \big),\\
\mathbf{s}_1(t)=\mathbf{s}(\mathcal H,k_1)(t),  \\ 
\mathbf{s}_2(t)=\mathbf{s}(\mathcal H,k_2)(t)
\end{cases}
\] 
is a solution set of the system \eqref{system}, and
equation \eqref{eq:multi-rel-01} holds with the sequence $\mathcal H=\{h_1,h_2,h_3\}$ and the  functions $k_1$ and $k_2$.
\end{example}
 
In the next example, the kernel functions of the Gaussian processes 
defining the transforms and convolutions involve trigonometric and hyperbolic (and hence exponential) functions.

\renewcommand{\theexample}{\thesection.6}

\begin{example}
Consider the function
\[
 h(t)= 2\sqrt{\csc\tfrac{\pi}{4}\big( t+\tfrac{1}{2}\big) \mathrm{cosh}\tfrac{\pi}{4}\big( t+\tfrac{1}{2}\big)}
\]
on $[0,1]$, and the  finite sequences
\[
\mathcal K_1=\big\{2\mathrm{tanh}\tfrac{\pi}{4}\big(t+\tfrac{1}{2} \big),
2\mathrm{sech} \tfrac{\pi}{4}\big(t+\tfrac{1}{2} \big),2 \cot\tfrac{\pi}{4}\big(t+\tfrac{1}{2} \big)\big\}
\]
and
\[
\mathcal K_2=\big\{\sqrt2\sin\tfrac{\pi}{4}\big(t+\tfrac{1}{2} \big),\sqrt2\cos\tfrac{\pi}{4}\big(t+\tfrac{1}{2} \big),
\sqrt2\mathrm{sinh}\tfrac{\pi}{4}\big(t+\tfrac{1}{2} \big),\sqrt2\mathrm{cosh}\tfrac{\pi}{4}\big(t+\tfrac{1}{2} \big)\big\}
\]
of functions in $L_{\infty}[0,1]$.
Then using the relationships  among hyperbolic functions and among  trigonometric functions, 
one can see that
\[
\mathbf{s}(\mathcal K_1)(t)=2\csc \tfrac{\pi}{4}\big(t+\tfrac{1}{2} \big)\quad \mbox{ and } 
\quad \mathbf{s}(\mathcal K_2)(t)=2\mathrm{cosh} \tfrac{\pi}{4}\big(t+\tfrac{1}{2} \big)
\]
on $[0,1]$.
From this we also see that
the set $\mathcal F_1=\{h, k_1, k_2, \mathbf{s}_1,\mathbf{s}_2\}$ 
of functions in $L_\infty[0,1]$ with 
\[
\begin{cases}
 h(t)= 2\sqrt{\csc\frac{\pi}{4}( t+\frac{1}{2}) \mathrm{cosh}\frac{\pi}{4}( t+\frac{1}{2})},  \\
k_1(t)=\mathbf{s}(\mathcal K_1)(t)=2\csc \tfrac{\pi}{4}\big(t+\tfrac{1}{2} \big) ,\\
k_2(t)=\mathbf{s}(\mathcal K_2)(t)=2\mathrm{cosh} \tfrac{\pi}{4}\big(t+\tfrac{1}{2} \big),\\
\mathbf{s}_1(t)=\mathbf{s}(h,\mathbf{s}(\mathcal K_1))(t),  \\ 
\mathbf{s}_2(t)=\mathbf{s}(h,\mathbf{s}(\mathcal K_2))(t)
\end{cases}
\] 
is a solution set  of the system \eqref{system}, and
equation \eqref{eq:multi-rel-02-2nd} holds with the function $h$, and the sequences $\mathcal K_1$   and $\mathcal K_2$.
\end{example}
 
\setcounter{equation}{0}
\section{Further results}

\par
In this section, we derive a more general relationship
between the iterated GFFT and the GCP for functionals 
in $\mathcal S(L_2[0,T])$.
To do this we also quote a  result  from \cite{Indag}.

\renewcommand{\thetheorem}{\thesection.1}

\begin{theorem} \label{thm:iter-gfft-more-1}
Let $F$ and $\mathcal H=\{h_1,\ldots,h_n\}$ be as in Theorem \ref{thm:2018-step1}. 
Assume that $q_1,q_2,\ldots, q_n$  are nonzero  real numbers with  
$\mathrm{sgn}(q_1)=\cdots=\mathrm{sgn}(q_n)$,  where `$\mathrm{sgn}$' denotes the 
sign function. Then, 
for all $p\in[1,2]$,
\[
\begin{aligned}
&T_{q_n,h_n}^{(p)}\big(T_{q_{n-1},h_{n-1}}^{(p)}\big(
   \cdots\big(T_{q_2,h_2}^{(p)}\big(T_{q_1,h_1}^{(p)}(F)\big)\big)\cdots\big)\big) (y)\\
&=T_{\alpha_n,\tau_n^{(n)}h_n}^{(p)}\Big(T_{\alpha_{n},\tau_n^{(n-1)}h_{n-1}}^{(p)}
\Big(\cdots \big(T_{\alpha_n,\tau_n^{(2)}h_2}^{(p)}
\big(T_{\alpha_n,\tau_n^{(1)}h_1}^{(p)}(F) \big)\big)\cdots\Big)\Big) (y)
\end{aligned}
\]
for s-a.e. $y\in C_{0}[0,T]$, where $\alpha_n$ is given by 
\[
\alpha_n=\frac{1}{\frac{1}{q_1}+\frac{1}{q_2}+\cdots+\frac{1}{q_n}}
\]
and $\tau_n^{(j)}=\sqrt{{\alpha_n}/{q_j}}$
for each $j\in \{1,\ldots,n\}$. Moreover it follows that
\[
T_{q_n,h_n}^{(p)}\big(T_{q_{n-1},h_{n-1}}^{(p)}\big(
   \cdots\big(T_{q_2,h_2}^{(p)}\big(T_{q_1,h_1}^{(p)}(F)\big)\big)\cdots\big)\big) (y)
=T_{\alpha_n,\mathbf{s}(\tau\mathcal H)}^{(p)}(F)(y)
\]
for s-a.e. $y\in C_{0}[0,T]$, where
$\mathbf{s}(\tau\mathcal H) \equiv \mathbf{s}(\tau_n^{(1)}h_1, \ldots, \tau_n^{(n)}h_n )$ 
is a function in $L_{\infty}[0,T]$ satisfying the relation
\[
\mathbf{s}(\tau\mathcal H) ^2(t)
=(\tau_n^{(1)}h_1)^2(t)+ \ldots+ (\tau_n^{(n)}h_n)^2(t)
\]
 for $m_L$-a.e. $t\in [0,T]$.
\end{theorem}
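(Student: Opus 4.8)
The plan is to reduce both claimed identities to a comparison of the complex measures in $\mathcal{M}(L_2[0,T])$ that represent the two sides, exploiting the fact that Theorem~\ref{thm:gfft} describes the action of a single GFFT on $\mathcal{S}(L_2[0,T])$ as multiplication of the representing measure by a Gaussian phase factor. Since all the analytic content (existence of the defining limits, analytic continuation in the parameter $\lambda$, membership in $\mathcal{S}(L_2[0,T])$) is already packaged into Theorems~\ref{thm:gfft} and \ref{thm:2018-step1}, the argument becomes purely algebraic bookkeeping of these phase factors.

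First I would compute the left-hand iterated transform by applying Theorem~\ref{thm:gfft} $n$ times in succession. Writing $F$ through its associated measure $f$ as in \eqref{eq:element}, set $F^{(0)}=F$ and $F^{(j)}=T_{q_j,h_j}^{(p)}(F^{(j-1)})$. Each application of Theorem~\ref{thm:gfft}, with parameter $q_j$ and kernel $h_j\in L_\infty[0,T]\setminus\{0\}$, again produces an element of $\mathcal{S}(L_2[0,T])$ and multiplies the representing measure by $\exp\{-\frac{i}{2q_j}\|uh_j\|_2^2\}$; because each intermediate functional lies again in $\mathcal{S}(L_2[0,T])$, the iteration is legitimate. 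By induction the left-hand side is represented by the measure $\exp\{-\frac{i}{2}\sum_{j=1}^n q_j^{-1}\|uh_j\|_2^2\}\,df(u)$.

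Next I would treat the right-hand iterated transform, which carries the common parameter $\alpha_n$ and the rescaled kernels $\tau_n^{(j)}h_j$. The preliminary point is that the hypothesis $\mathrm{sgn}(q_1)=\cdots=\mathrm{sgn}(q_n)$ forces $\alpha_n/q_j>0$ for every $j$: if all $q_j$ share a sign, then $\sum_j q_j^{-1}$ and hence $\alpha_n$ share that sign, so $\alpha_n/q_j>0$. Consequently $\tau_n^{(j)}=\sqrt{\alpha_n/q_j}$ is a genuine positive real constant and each $\tau_n^{(j)}h_j$ is again a nonzero element of $L_\infty[0,T]$, which is exactly what makes the right-hand side and the function $\mathbf{s}(\tau\mathcal H)$ well defined. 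Since all parameters now coincide, Theorem~\ref{thm:2018-step1} (in particular \eqref{eq:gfft-n-fubini-add}) applies directly and identifies this iterated transform with the single transform $T_{\alpha_n,\mathbf{s}(\tau\mathcal H)}^{(p)}(F)$, whose representing measure carries the factor $\exp\{-\frac{i}{2\alpha_n}\|u\,\mathbf{s}(\tau\mathcal H)\|_2^2\}$. Using $\mathbf{s}(\tau\mathcal H)^2=\sum_j(\tau_n^{(j)})^2 h_j^2$ together with the defining relation $(\tau_n^{(j)})^2=\alpha_n/q_j$, this factor collapses to $\exp\{-\frac{i}{2}\sum_{j=1}^n q_j^{-1}\|uh_j\|_2^2\}$, which is precisely the factor found for the left-hand side.

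Both sides are therefore elements of $\mathcal{S}(L_2[0,T])$ represented by the same integral against the same measure in $\mathcal{M}(L_2[0,T])$, and hence agree s-a.e., which is the first displayed identity; the ``Moreover'' identity is the same computation read through the Theorem~\ref{thm:2018-step1} identification with $T_{\alpha_n,\mathbf{s}(\tau\mathcal H)}^{(p)}(F)$. I do not anticipate a genuine obstacle, as the only point demanding care is the verification that the two exponential weights coincide; this rests entirely on the algebraic identity $(\tau_n^{(j)})^2/\alpha_n=1/q_j$ built into the definitions of $\alpha_n$ and $\tau_n^{(j)}$, together with the sign condition that guarantees the rescaled kernels $\tau_n^{(j)}h_j$ are admissible ($L_\infty$, real-valued, nonzero) so that Theorems~\ref{thm:gfft} and \ref{thm:2018-step1} may legitimately be invoked on the right-hand side.
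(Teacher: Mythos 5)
Your argument is correct: iterating Theorem~\ref{thm:gfft} shows both iterated transforms are elements of $\mathcal S(L_2[0,T])$ whose representing measures carry the factor $\exp\{-\tfrac{i}{2}\sum_{j=1}^n q_j^{-1}\|uh_j\|_2^2\}$ (using $(\tau_n^{(j)})^2=\alpha_n/q_j$ and the sign condition to keep $\tau_n^{(j)}$ a positive real), and the ``Moreover'' identity then follows from \eqref{eq:gfft-n-fubini-add}. The paper itself gives no proof of this statement --- it is quoted from \cite{Indag} --- but your phase-factor bookkeeping is precisely the mechanism by which the companion results (Theorems~\ref{thm:gfft} and~\ref{thm:2018-step1}) are established there, so this is essentially the intended argument.
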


\par
Next, by a careful examination we see that for all $F\in \mathcal S(L_2[0,T])$ 
and any positive real $\beta>0$, 
\begin{equation}\label{eq:2018-new-parameter-change}
T_{\beta q,h}^{(p)}  (F) \approx T_{q,h/\sqrt{\beta}}^{(p)}(F) .
\end{equation}
Using  \eqref{eq:2018-new-parameter-change} and \eqref{eq:cp-fft-basic},
we have the following lemma.

\renewcommand{\thelemma}{\thesection.2}
\begin{lemma}  \label{thm:2018-last-pre}
Let $k_1$, $k_2$, $F$, $G$, and  $h$ be as in 
Theorem  \ref{thm:gfft-gcp-compose}. Let 
$q$, $q_1$, and $q_2$ be nonzero real numbers with 
$\mathrm{sgn}(q)=\mathrm{sgn}(q_1)=\mathrm{sgn}(q_2)$.
Then, for all $p\in [1,2]$,
\[
\begin{aligned}
&
\big(T_{q_1,\sqrt{q_1/(2q)} \mathbf{s}(h,k_1)}^{(p)} (F)*
 T_{q_2,\sqrt{q_2/(2q)}\mathbf{s}(h,k_2)}^{(p)}(G) \big)_{-q}^{(k_1,k_2)}(y)\\
&
=T_{q,h}^{(p)} \bigg(F \bigg(\frac{\cdot}{\sqrt2} \bigg)
G \bigg(\frac{\cdot}{\sqrt2}\bigg)\bigg)(y)
\end{aligned}
\]
for s-a.e. $y\in C_{0}[0,T]$.
\end{lemma}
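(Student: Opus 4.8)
**The plan is to reduce Lemma 5.2 to the already-established Theorem 3.6 (equation \eqref{eq:cp-fft-basic}) by systematically absorbing the auxiliary parameters $q_1$ and $q_2$ into the kernel functions via the scaling identity \eqref{eq:2018-new-parameter-change}.** The key observation is that \eqref{eq:2018-new-parameter-change} lets me trade a dilation of the Feynman parameter for a rescaling of the Gaussian kernel: $T_{\beta q,h}^{(p)}(F)\approx T_{q,h/\sqrt{\beta}}^{(p)}(F)$ for any $\beta>0$. The statement of Theorem 3.6 involves the transforms $T_{q,\mathbf{s}(h,k_1)/\sqrt{2}}^{(p)}(F)$ and $T_{q,\mathbf{s}(h,k_2)/\sqrt{2}}^{(p)}(G)$ at the common parameter $q$, whereas the lemma replaces these by transforms at the distinct parameters $q_1$ and $q_2$ with correspondingly altered kernels. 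My goal is to show these two descriptions coincide.

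First I would rewrite each transform appearing in the lemma in the normalized form used in Theorem 3.6. Applying \eqref{eq:2018-new-parameter-change} with $\beta=q_1/q$ to the first factor, I expect
\[
T_{q_1,\sqrt{q_1/(2q)}\,\mathbf{s}(h,k_1)}^{(p)}(F)
\approx T_{(q_1/q)\cdot q,\;\sqrt{q_1/(2q)}\,\mathbf{s}(h,k_1)}^{(p)}(F)
\approx T_{q,\;\sqrt{q_1/(2q)}\,\mathbf{s}(h,k_1)/\sqrt{q_1/q}}^{(p)}(F),
\]
and the kernel simplifies since $\sqrt{q_1/(2q)}\big/\sqrt{q_1/q}=1/\sqrt{2}$, giving exactly $T_{q,\mathbf{s}(h,k_1)/\sqrt{2}}^{(p)}(F)$. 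The sign hypothesis $\mathrm{sgn}(q)=\mathrm{sgn}(q_1)=\mathrm{sgn}(q_2)$ is precisely what guarantees that the ratios $q_1/q$ and $q_2/q$ are positive, so that \eqref{eq:2018-new-parameter-change} applies with a genuine positive $\beta$. The identical computation with $\beta=q_2/q$ converts the second factor into $T_{q,\mathbf{s}(h,k_2)/\sqrt{2}}^{(p)}(G)$.

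Having reduced both factors, the GCP in the lemma becomes
\[
\Big(T_{q,\mathbf{s}(h,k_1)/\sqrt{2}}^{(p)}(F)
 * T_{q,\mathbf{s}(h,k_2)/\sqrt{2}}^{(p)}(G)\Big)_{-q}^{(k_1,k_2)}(y),
\]
which is the left-hand side of \eqref{eq:cp-fft-basic}. Invoking Theorem 3.6 (whose hypotheses on $k_1,k_2,F,G,h$ are inherited verbatim) then yields the right-hand side $T_{q,h}^{(p)}\big(F(\cdot/\sqrt{2})\,G(\cdot/\sqrt{2})\big)(y)$, completing the proof.

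The main obstacle I anticipate is not conceptual but bookkeeping: ensuring that the kernel substitution inside the GCP is legitimate. The approximation $\approx$ in \eqref{eq:2018-new-parameter-change} holds s-a.e., and I must confirm that replacing a functional by an s-a.e.-equal one inside the GCP $(\cdot * \cdot)_{-q}^{(k_1,k_2)}$ preserves the s-a.e. identity of the output. This is precisely the stability property noted after Definition 2.1 (that $F\approx G$ implies the transforms agree s-a.e.), and by Theorem 3.2 the GCP of functionals in $\mathcal{S}(L_2[0,T])$ again lies in $\mathcal{S}(L_2[0,T])$ and depends only on the associated measures; since s-a.e.-equal elements of $\mathcal{S}(L_2[0,T])$ share the same measure, the substitution is harmless. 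Verifying the algebraic simplification $\sqrt{q_j/(2q)}\big/\sqrt{q_j/q}=1/\sqrt{2}$ is routine, so the real care lies in tracking the measurability/s-a.e. hypotheses rather than in any hard estimate.
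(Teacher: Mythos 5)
Your proposal is correct and is exactly the argument the paper intends: the paper gives no explicit proof, merely noting that the lemma follows from \eqref{eq:2018-new-parameter-change} and \eqref{eq:cp-fft-basic}, and your computation ($\beta=q_j/q>0$ by the sign hypothesis, with $\sqrt{q_j/(2q)}\big/\sqrt{q_j/q}=1/\sqrt2$ recovering the kernels $\mathbf{s}(h,k_j)/\sqrt2$ of Theorem \ref{thm:cp-tpq02}) fills in precisely those details. Your closing remark about s-a.e.\ substitution inside the GCP being harmless for elements of $\mathcal S(L_2[0,T])$ is also the right justification.
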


\par
Finally, in view of Theorem  \ref{thm:iter-gfft-more-1} 
and Lemma  \ref{thm:2018-last-pre}, we obtain the following
assertion.

\renewcommand{\thetheorem}{\thesection.3}

\begin{theorem}  \label{thm:2018-last}
Let $k_1$, $k_2$, $F$, $G$, and  $h$ be as in 
Theorem \ref{thm:gfft-gcp-compose}. Let
$\mathcal H_1=\{h_{1j}\}_{j=1}^n$ 
and
$\mathcal H_2=\{h_{2l}\}_{l=1}^m$
be finite sequences of nonzero 
functions in $L_{\infty}[0,T]$. 
Given nonzero real numbers $q$, $q_1$, $q_{11}$, $\ldots$, $q_{1n}$, $q_2$,  $q_{21}$,
$\ldots$, $q_{2m}$ with 
\[
\begin{aligned}
\mathrm{sgn}(q)
&=\mathrm{sgn}(q_1)=\mathrm{sgn}(q_{11})=\cdots=\mathrm{sgn}(q_{1n})\\
&=\mathrm{sgn}(q_2)=\mathrm{sgn}(q_{21})=\cdots=\mathrm{sgn}(q_{2m}),
\end{aligned}
\]
let
 \[
\alpha_{1n}=\frac{1}{\frac{1}{q_{11}}+\frac{1}{q_{12}}+\cdots+\frac{1}{q_{1n}}},
\]
\[
\alpha_{2m}=\frac{1}{\frac{1}{q_{21}}+\frac{1}{q_{22}}+\cdots+\frac{1}{q_{2m}}},
\]
\[
\beta_{1n}=\frac{1}{\frac{1}{q_{1}}+\frac{1}{q_{11}}+\frac{1}{q_{12}}+\cdots+\frac{1}{q_{1n}}},
\]
and
\[
\beta_{2m}=\frac{1}{\frac{1}{q_{2}}+\frac{1}{q_{21}}+\frac{1}{q_{22}}+\cdots+\frac{1}{q_{2m}}}.
\]
Furthermore, assume that 
\[
h^2 = \mathbf{s}(\tau_{1n}\mathcal H_1)\mathbf{s}(\tau_{2m}\mathcal H_2)
\]
for  $m_L$-a.e. on $[0,T]$, where $\mathbf{s}(\tau_{1n}\mathcal H_1)$ 
and $\mathbf{s}(\tau_{2m}\mathcal H_2)$ are  functions in $L_{\infty}[0,T]$ 
satisfying the relation
\[
\mathbf{s}(\tau_{1n}\mathcal H_1)^2
\equiv \mathbf{s}(\tau_{1n}^{(1)}h_{11}, \ldots, \tau_{1n}^{(n)}h_{1n} )^2
=(\tau_{1n}^{(1)}h_{11})^2 + \cdots+ (\tau_{1n}^{(n)}h_{1n})^2 
\] 
and
\[
\mathbf{s}(\tau_{2m}\mathcal H_2)^2
\equiv \mathbf{s}(\tau_{2m}^{(1)}h_{21}, \ldots, \tau_{2m}^{(m)}h_{2m} )^2
=(\tau_{2m}^{(1)}h_{21})^2 + \cdots+ (\tau_{2m}^{(m)}h_{2m})^2 ,
\]
 respectively, and where
$\tau_{1n}^{(j)}=\sqrt{{\alpha_{1n}}/{q_{1j}}}$ for each $j\in \{1,\ldots,n\}$, and
$\tau_{2m}^{(l)}=\sqrt{{\alpha_{2m}}/{q_{2l}}}$ for each $l\in \{1,\ldots,m\}$.
For notational convenience, let
\[
h_1'=\sqrt{q_{1}/(2q)}h,\quad  h_{1j}'=\sqrt{\alpha_{1n}/(2q)}h_{1j}, \quad  j=1,\ldots,n, 
\]
and let
\[
h_2' =\sqrt{q_{2}/(2q)}h,\quad  h_{2l}'=\sqrt{\alpha_{2m}/(2q)}h_{2l}, \quad    l=1,\ldots,m.
\]
Then,  for all $p\in[1,2]$,
\[ 
\begin{aligned}
&\Big(T_{q_1,h_1'}^{(p)}\big(T_{q_{1n},h_{1n}'}^{(p)}\big(
\cdots\big(T_{q_{11},h_{11}'}^{(p)} (F)\big)\cdots \big)\big) \\
& \quad\quad\quad 
*T_{q_2,h_2'}^{(p)}\big( T_{q_{2m}, h_{2m}'}^{(p)}\big( 
\cdots\big(T_{q_{21}, h_{21}' }^{(p)} (G)\big)\cdots \big)\big)
\Big)_{-q}^{(\mathbf{s}(\tau_{1n}\mathcal H_1),\mathbf{s}(\tau_{2m}\mathcal H_2))} (y)\\
&=\Big(T_{q_1,\sqrt{q_{1}/(2q)} h}^{(p)}
\big( T_{\alpha_{1n},\sqrt{\alpha_{1n}/(2q)}\mathbf{s}(\tau_{1n}\mathcal H_1)}^{(p)}(F)\big)\\
&\quad \quad \quad
       *T_{q_2,\sqrt{q_{2}/(2q)}h}^{(p)}
\big(T_{\alpha_{2m},\sqrt{\alpha_{2m}/(2q)} \mathbf{s}(\tau_{2m}\mathcal H_2)}^{(p)}(G)\big)
\Big)_{-q}^{(\mathbf{s}(\tau_{1n}\mathcal H_1),\mathbf{s}(\tau_{2m}\mathcal H_2))} (y)\\
&=\Big( T_{\beta_{1n},\sqrt{\beta_{1n}/(2q)}\mathbf{s}(h,\mathbf{s}(\tau_{1n}\mathcal H_1))}^{(p)}(F)\\
&\qquad \qquad \qquad\qquad\,\,\,       
       *T_{\beta_{2m},\sqrt{\beta_{2m}/(2q)}\mathbf{s}(h,\mathbf{s}(\tau_{2m}\mathcal H_2))}^{(p)}(G)
\Big)_{-q}^{(\mathbf{s}(\tau_{1n}\mathcal H_1),\mathbf{s}(\tau_{2m}\mathcal H_2))} (y)\\
&= T_{q,h}^{(p)} \bigg(F \bigg(\frac{\cdot}{\sqrt2} \bigg)
 G \bigg(\frac{\cdot}{\sqrt2}\bigg)\bigg)(y) 
\end{aligned}
\] 
for s-a.e. $y\in C_{0}[0,T]$.
\end{theorem}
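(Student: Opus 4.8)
The plan is to prove the displayed four-line chain by establishing its three equalities one at a time, collapsing every iterated GFFT into a single one via Theorem \ref{thm:iter-gfft-more-1} and invoking Lemma \ref{thm:2018-last-pre} for the final identification. The scaling factors $\sqrt{\cdot/(2q)}$ attached to each kernel are chosen precisely so that the composition rule of Theorem \ref{thm:iter-gfft-more-1} reproduces exactly the kernel on the following line; thus the structural moves are immediate and the real content is the verification that these constants combine as claimed.

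For the first equality I would apply Theorem \ref{thm:iter-gfft-more-1} to the two inner iterated transforms separately. Take the $F$-chain $T_{q_{1n},h_{1n}'}^{(p)}(\cdots(T_{q_{11},h_{11}'}^{(p)}(F))\cdots)$, whose kernels are $h_{1j}'=\sqrt{\alpha_{1n}/(2q)}\,h_{1j}$ and whose parameters are $q_{11},\ldots,q_{1n}$ (all of sign $\mathrm{sgn}(q)$, so the theorem applies). It collapses to a single transform with combined parameter $\alpha_{1n}$, matching the definition since $1/\alpha_{1n}=\sum_{j=1}^n 1/q_{1j}$, and combined kernel $\mathbf{s}(\tau_{1n}^{(1)}h_{11}',\ldots,\tau_{1n}^{(n)}h_{1n}')$. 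Because the common factor $\sqrt{\alpha_{1n}/(2q)}$ pulls out of the sum of squares defining $\mathbf{s}$, this kernel equals $\sqrt{\alpha_{1n}/(2q)}\,\mathbf{s}(\tau_{1n}\mathcal H_1)$, the inner kernel on the second line; the outer transform $T_{q_1,\sqrt{q_1/(2q)}\,h}^{(p)}$ and the entire GCP structure are untouched. The identical computation handles the $G$-chain.

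For the second equality I would again apply Theorem \ref{thm:iter-gfft-more-1}, now to a two-fold composition. For the $F$-factor the inner kernel is $\sqrt{\alpha_{1n}/(2q)}\,\mathbf{s}(\tau_{1n}\mathcal H_1)$ (parameter $\alpha_{1n}$) and the outer kernel is $\sqrt{q_1/(2q)}\,h$ (parameter $q_1$). The combined parameter is $1/(1/\alpha_{1n}+1/q_1)=\beta_{1n}$, and the combined kernel squares to $\frac{\beta_{1n}}{2q}\big(\mathbf{s}(\tau_{1n}\mathcal H_1)^2+h^2\big)=\frac{\beta_{1n}}{2q}\,\mathbf{s}(h,\mathbf{s}(\tau_{1n}\mathcal H_1))^2$, so the composition becomes $T_{\beta_{1n},\sqrt{\beta_{1n}/(2q)}\,\mathbf{s}(h,\mathbf{s}(\tau_{1n}\mathcal H_1))}^{(p)}(F)$, which is the $F$-factor on the third line; the $G$-factor is analogous.

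For the third equality I would apply Lemma \ref{thm:2018-last-pre} with $k_1=\mathbf{s}(\tau_{1n}\mathcal H_1)$, $k_2=\mathbf{s}(\tau_{2m}\mathcal H_2)$, convolution parameter $q$, and the lemma's two transform parameters taken as $\beta_{1n}$ and $\beta_{2m}$. Its hypotheses hold: the sign condition $\mathrm{sgn}(q)=\mathrm{sgn}(\beta_{1n})=\mathrm{sgn}(\beta_{2m})$ follows because every $q_{\bullet}$ shares the sign of $q$ and $\beta_{1n},\beta_{2m}$ are reciprocals of same-sign sums of reciprocals; the product requirement $h^2=k_1k_2$ is exactly the standing hypothesis $h^2=\mathbf{s}(\tau_{1n}\mathcal H_1)\mathbf{s}(\tau_{2m}\mathcal H_2)$; and $k_1,k_2$ are nonzero and lie in $L_\infty[0,T]$. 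The lemma then rewrites the third line as $T_{q,h}^{(p)}(F(\cdot/\sqrt2)\,G(\cdot/\sqrt2))(y)$, completing the chain. I expect the second equality to be the one demanding care, since there the nested parameter $\alpha_{1n}\to\beta_{1n}$ and the nested kernel $\mathbf{s}(h,\mathbf{s}(\tau_{1n}\mathcal H_1))$ arise together; indeed the whole difficulty of the proof is the bookkeeping that confirms each invocation of Theorem \ref{thm:iter-gfft-more-1} reproduces the next kernel verbatim, with the $\sqrt{\cdot/(2q)}$ prefactor surviving every collapse under the correct index.
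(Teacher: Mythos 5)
Your proposal is correct and follows exactly the route the paper intends: the paper gives no written proof beyond the remark that the theorem follows ``in view of Theorem~\ref{thm:iter-gfft-more-1} and Lemma~\ref{thm:2018-last-pre},'' and your three steps (collapsing the inner iterated chains via Theorem~\ref{thm:iter-gfft-more-1}, collapsing the remaining two-fold compositions the same way to reach the $\beta$-parameters and the kernels $\sqrt{\beta_{\bullet}/(2q)}\,\mathbf{s}(h,\mathbf{s}(\tau_{\bullet}\mathcal H_{\bullet}))$, then invoking Lemma~\ref{thm:2018-last-pre} with $k_1=\mathbf{s}(\tau_{1n}\mathcal H_1)$, $k_2=\mathbf{s}(\tau_{2m}\mathcal H_2)$) are precisely that argument with the bookkeeping of the $\sqrt{\cdot/(2q)}$ prefactors verified correctly.
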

   
\section*{Acknowledgements}
 The authors would like to express their gratitude to the
editor and the referees for their valuable comments and suggestions which have
improved the original paper. The present research was supported supported by the research fund of Dankook University in 2019.


\end{document}